\theoremstyle{plain}
\newtheorem{theorem}{Theorem}
\newtheorem{lemma}[theorem]{Lemma}
\newtheorem{corollary}[theorem]{Corollary}
\newtheorem{proposition}[theorem]{Proposition}
\theoremstyle{definition}
\newtheorem{definition}[theorem]{Definition}
\theoremstyle{remark}
\newtheorem{remark}[theorem]{Remark}
\newtheorem{example}[theorem]{Example}
\numberwithin{equation}{section}
\numberwithin{theorem}{section}
\numberwithin{equation}{section}
\numberwithin{theorem}{section}
\newcommand{\set}[2]{\left\{#1\left|\; #2\right.\right\}}
\newcommand{\Prim}{\operatorname{Prim}}
\begin{document}

\title{\textbf{Structure of unital 3-fields }}
\author{{Steven Duplij}}
\address{Center for Information Technology (WWU IT),
Mathematisches Institut, Westf\"{a}lische Wilhelms-Universit\"{a}t M\"{u}nster,
D-48149 M\"{u}nster, Germany}
\email{douplii@uni-muenster.de, sduplij@gmail.com}
\author{{Wend Werner}}
\address{Mathematisches Institut, Westf\"{a}lische Wilhelms-Universit\"{a}t M\"{u}nster\\
Einsteinstrasse 62, D-48149 M\"{u}nster, Germany}
\email{wwerner@uni-muenster.de}

\begin{abstract}
We investigate fields in which addition requires three summands. These ternary fields are shown to be isomorphic
to the set of invertible elements in a local ring $\mathcal{R}$ having $\mathbb{Z}\diagup 2\mathbb{Z}$ as a residual field.
One of the important technical ingredients is to intrinsically characterize the maximal
ideal of $\mathcal{R}$.
We include a number of illustrative examples and prove that the structure of a finite 3-field is not connected
to any binary field.
\end{abstract}
\maketitle

{\small
\textsc{\tableofcontents}
}

\thispagestyle{empty}

\section*{Introduction}
Most of us seem to be biologically biased towards thinking that
it always requires two in order to generate a third. In mathematics
or physics, however, this idea does not seem to rest on a sound foundation:
The theory of symmetric spaces, for example, is nicely described in terms
of Lie or Jordan triple systems (\cite{chu1,upmeier}; see e.g. \cite{boh/wer2}
for a recent development),
and in physics, higher Lie algebras have come into focus
in \cite{nam0} (for later development see e.g. \cite{ker1,azc/izq})
and were e.g.\ applied to the theory of M2-branes in \cite{bag/lam1}. Ternary Hopf
algebras were introduced and investigated in \cite{dup26}.

In order to illustrate why oftentimes (but not here), ternary algebraic structure does
not introduce new aspects, let us digress somewhat and have a closer look at a simple example,
\emph{commutative ternary groups}.

If $G$ is a set and $m:G^3\to G$ is a mapping, what properties should m have in order that $G$
be called a \emph{(commutative) ternary group} with multiplication given by $m$?
Here is a list.
\begin{itemize}
  \item \emph{Associativity}, as always, should mean that there is no need for brackets
  when we multiply several times in a row.
  \item The ternary multiplication is \emph{commutative}, iff it is invariant under
  any permutation of the factors
  \item Finally, every element $g\in G$ has a \emph{(ternary) inverse} $\overline{g}$ (the
  \emph{quer} element) so that for all $g_0\in G$
  \begin{equation*}
  g_0g\overline{g}=g_0.
  \end{equation*}
\end{itemize}
This is all that is needed in the way of a definition. Note that there is no neutral element,
which nonetheless can be defined by saying
$e\in G$ is neutral, iff $eeg=g$, for all $g\in G$.

The reason that we did not include the existence of a neutral element $e$ in the definition is
that as soon as there is one, $G$ equipped with the product
\begin{equation*}
g\times h:=geh,
\end{equation*}
becomes a binary group (with neutral element $e$
and inverses $g^{-1}=\overline{g}$) such that for all $g_{1,2,3}\in G$
\begin{equation*}
g_1\times g_2\times g_3=g_1g_2g_3.
\end{equation*}
So here, $G$ is trivial in that its ternary product directly comes from a binary one.
\footnote{In binary parlance, units as defined here resemble reflections, and so
no uniqueness of unit elements is to be expected. The resulting binary
groups, though, are all pairwise isomorphic: If, in fact, $e_{1,2}$ are two
units, and if we denote the corresponding binary groups by $G_1$ and $G_2$, repectively, then the mapping $\Phi:G_1\to G_2$, given by $\Phi(g)=e_1e_2g$
is easily seen to be an isomorphism.}

This, however always is the case, albeit for a different reason.

A more general example of this kind is the following. Pick a binary commutative group $G$, fix $g_0\in G$, and let
\begin{equation*}
m(g_1,g_2,g_3)=g_0g_1g_2g_3
\end{equation*}
Then $(G,m)$ is a ternary group with $\overline{g}=g_0^{-1}g^{-1}$
for each $g\in G$.
Also, $(G,m)$ is unital iff $g_0=w^2$ for some $w\in G$ in which
case $e=w^{-1}$ is a unit.

No further examples exist, as we will see next.
For a ternary, commutative group $G$ we consider the set of mappings
\begin{equation*}
P(G)=\set{p_{a,b}:G\to G}{a,b\in G,\ p_{a,b}(g)=abg}.
\end{equation*}
Under composition of mappings, $P(G)$ is a (binary) group
(with unit $p_{g,\overline{g}}$, and $p_{a,b}^{-1}=p_{\overline{a},\overline{b}}$).
Fixing an element $g_0\in G$, we find that the map
\begin{equation*}
\Phi_{g_0}: G\to P(G),\qquad \Phi_{g_0}(g)=p_{g,g_0}
\end{equation*}
is a morphism of $G$ into $P(G)$, when the latter is equipped with
the ternary product $p_1p_2p_3=p_{\overline{g}_0,\overline{g}_0}\circ p_1\circ p_2\circ p_3$,
and we obtain

\medskip

\emph{For any commutative ternary group $G$ there is a binary group $G_0$ as well
as an element $g_0\in G_0$ such that $G$ is isomorphic to the ternary group
defined on $G_0$ through the ternary product
$m(g_1,g_2,g_3)=g_0g_1g_2g_3$
}

\medskip

And we conclude: As far as commutative ternary groups are concerned, the
theory doesn't produce much of a novelty.

In this paper we investigate 3-fields, a structure in which the binary operations
of the classical theory are replaced by ternary ones. There is a marked difference
between addition and multiplication, due in principle to the different way in which addition
and multiplication enter the distributive law. Whereas the multiplicative structure of
higher arity in
rings makes easier contact with binary algebra (see e.g. \cite{lee/but}
or \cite{elg/bre}, for example) largely based on the fact that, as above,
multiplicative pairs can be used in these examples just as we did here for commutative ternary groups, ternary addition
produces phenomena of a more unusual kind and has been, to the knowledge
of the authors, treated less thoroughly. We therefore keep multiplication in the fields
binary for the moment and stick to ternary addition. Technically, this fact is hidden
behind the expression \emph{unital}, as in ternary group theory one can very well dispose
of a unit, and, even more strikingly, the truly ternary case is characterized by the
absence of one.

It turns out that there are a number (actually, one that might turn out to be too
large) of interesting examples of 3-fields, finite ones, a certain subset of the 2-adic
numbers, a class of finite skew-3-fields, based on the quaternion group, or a number
of group 3-algebras which actually turn out to be 3-fields.

Here is what we will do in the following: The first section collects some basic
theory (based on the pioneering papers \cite{dor3,pos}),
section 2 introduces the main technical tool that permits a connection
to binary algebra, in the third section we deal with ideals which probably provide the
most uncommon definition in this paper, section 4 is brief on 3-vector spaces
and 3-algebras, just enough in order to be well equipped for a first attack on the
classification of finite 3-fields in the final section. Among other things, we will prove
here that the number of elements of a finite 3-field is a power of two, that their structure
is governed by certain polynomials, with coefficients from the unit disk $\mathcal{B}_2$ of the
2-adic number field
$\mathbb{Q}_2$ and (in the case of a single generator, mapping $\partial \mathcal{B}_2$
into the interior of $\mathcal{B}_2$).
Furthermore, each such field carries a structure totally different from classical fields, because
essentially none of the finite ternary unital 3-fields embeds into a binary field, when the
latter is supposed to carry its canonical ternary structure.

\section{Some history, basics and examples}
The present topic has some precursors. Besides the ones mentioned in the previous
section, groups with additional, quite general "multiple operators" were considered in
abstract form in the late 50's and 60's (with \cite{hig} probably being a first reference for
this perspective; see also the survey \cite{kur1}).
Our construction is connected with the notion of $\left(  n,m\right)  $-rings which was
introduced in \cite{cup} and further studied in
\cite{cel,cro/tim}. The Post theorem for $\left(
n,m\right)  $-rings was formulated in \cite{cro1}.
In almost all of these investigations, the authors treat the case of $n$-fold sums and products;
in many respects, though, the case where $n=3$, shows some more particular features so that we, for
now, restrict our attention to this special case.

First we recall the general notion of a $\left(  3,3\right)  $-ring
\cite{cro1,cel}. We have two different operations on a set $\mathcal{R}$: the ternary
addition $\nu:\mathcal{R}\times \mathcal{R}\times \mathcal{R}\rightarrow \mathcal{R}$ and the ternary multiplication
$\mu:\mathcal{R}\times \mathcal{R}\times \mathcal{R}\rightarrow \mathcal{R}$. We suppose that both operations are
totally associative
\begin{align}
\nu\left(  \nu\left(  x,y,z\right)  ,t,u\right)   &  =\nu\left(  x,\nu\left(
y,z,t\right)  ,u\right)  =\nu\left(  x,y,\nu\left(  z,t,u\right)  \right)  ,\\
\mu\left(  \mu\left(  x,y,z\right)  ,t,u\right)   &  =\mu\left(  x,\mu\left(
y,z,t\right)  ,u\right)  =\mu\left(  x,y,\mu\left(  z,t,u\right)  \right)  ,
\end{align}
where $x,y,z,t,u\in \mathcal{R}$. This means that both $\left\langle \mathcal{R},\nu\right\rangle
$ and $\left\langle \mathcal{R},\mu\right\rangle $ are ternary semigroups. The
connection between them is given by a ternary analog of the distributive law.
A general form of the ternary distributivity (which we will use here) is
\begin{align}
\mu\left(  \nu\left(  x,y,z\right)  ,t,u\right)   &  =\nu\left(  \mu\left(
x,t,u\right)  ,\mu\left(  y,t,u\right)  ,\mu\left(  z,t,u\right)  \right)  ,\\
\mu\left(  t,\nu\left(  x,y,z\right)  ,u\right)   &  =\nu\left(  \mu\left(
t,x,u\right)  ,\mu\left(  t,y,u\right)  ,\mu\left(  t,z,u\right)  \right)  ,\\
\mu\left(  t,u,\nu\left(  x,y,z\right)  \right)   &  =\nu\left(  \mu\left(
t,u,x\right)  ,\mu\left(  t,u,y\right)  ,\mu\left(  t,u,z\right)  \right)  .
\end{align}

The semigroup $\left\langle \mathcal{R},\nu\right\rangle $ is assumed to be a ternary
group so that for all $a,b,c\in \mathcal{R}$ there exists a unique solution of the
equation \cite{dor3,pos},
\begin{equation}
\nu\left(  a,b,x\right)  =c. \label{vab}
\end{equation}
Here, the important notion of a \textit{querelement} \cite{dor3},
denoted by $\tilde{x}$ for the addition $\nu$ comes into play. It satisfies
\begin{equation}
\nu\left(  y,x,\tilde{x}\right)  =x,\label{nx}
\end{equation}
for all $y\in\mathcal{R}$. D\"ornte shows in his paper that the existence of
querelements for all $x\in\mathcal{R}$ is equivalent to unique solvability (\ref{vab}, his axiom $P_3$), and, in particular, the querelement is uniquely determined.

\begin{definition}
A set $\mathcal{R}$ with two operations $\nu$ and $\mu$ satisfying distributivity and for
which $\left\langle \mathcal{R},\nu\right\rangle $ is a (commutative) ternary group and
$\left\langle \mathcal{R},\mu\right\rangle $ is a ternary semigroup is called a $\left(
3,3\right)  $-\emph{ring}, or for shortness, a $3$-\emph{ring}.
\end{definition}

\begin{definition}
If ternary multiplication $\mu$ on $\mathcal{R}$ is commutative, i.e. if
$\mu=\mu\circ\sigma$, where $\sigma$ is any permutation from $S_{3}$, then we
call $\mathcal{R}$ a \emph{commutative} 3-\emph{ring}.
\end{definition}

\begin{definition}
An element $0$ of a 3-ring $\mathcal{R}$, is called
a \emph{ternary zero} iff
\begin{equation}
\mu\left(0,x,y\right)=\mu\left(x,0,y\right)=\mu\left(x,y,0\right)=0
\end{equation}
for all $x,y\in\mathcal{R}$.
\end{definition}

Distributivity in $\mathcal{R}$ shows that $\nu\left(0,0,x\right)=x$ for all $x\in\mathcal{R}$.
Furthermore, other than a mere neutral element for 3-group, a zero element of a 3-ring is uniquely
determined.

\begin{definition}
Let $\mathcal{R}$ be a $\left(  3,3\right)  $-ring. $\mathcal{R}$ is called a
$\left(  2,3\right)  $-\emph{ring}, if its addition $\nu$ is derived from a binary
addition $+$, i.e. $\nu\left(  x,y,z\right)  =x+y+z$. Similarly, it is called
a $\left(  3,2\right)  $-\emph{ring}, if its multiplication $\mu$ is derived from a
binary multiplication $\cdot$, i.e. $\mu\left(  x,y,z\right)  =x\cdot y\cdot
z$.
\end{definition}

\begin{proposition}
Suppose that $\mathcal{R}$ is a $\left(3,3\right)$-ring.
\begin{enumerate}
\item
Under the assumption that $\mathcal{R}$ contains a multiplicative unit $1$,
\begin{equation}
a\bullet b:=\mu\left(a,1,b\right),
\end{equation}
yields a binary associative, commutative product so that
$\mu\left(a,b,c\right)=a\bullet b\bullet c$,
and $\mathcal{R}$ may be viewed as a $\left(3,2\right)$-ring.
\item
If $\mathcal{R}$ contains a zero element $0$, then, similar as in the
previous statement,
\begin{equation*}
a+b:=\nu\left(a,0,b\right)
\end{equation*}
defines a binary, associative and commutative composition with $\nu\left(a,b,c\right)=a+b+c$,
and $\mathcal{R}$ is a $\left(2,3\right)$-ring.
\item
Whenever $\mathcal{R}$ contains both, $1$ and $0$, then it is a binary ring.
\end{enumerate}
\end{proposition}
The proof of this statement (which in large part is similar to the corresponding
one for commutative ternary groups as in the introduction), which we do not want to present here, essentially
consists of writing out all the involved definitions and observing
that the above statements indeed are correct. Note, however, that it is important to have a zero elment
in $\mathcal{R}$ (as opposed to a merely additive neutral element), as this is needed in order
to show that the binary addition satisfies the distributive law.
\begin{definition}
We call $\mathcal{R}$ a \emph{proper} $\left(  3,3\right)  $-\emph{ring}, iff neither $\mu$
nor $\nu$ are derived, and $\mathcal{R}$ is a \emph{proper unital} $3$-\emph{ring}, iff (only) its
multiplication $\nu$ is derived.
\end{definition}

We now come to the central defintion of this paper, the concept of a 3-field.
Fields of this type seem to have first appeared in \cite[Defintiion 3.1]{cro/tim} and
\cite[Defintiion 3.5]{lee/but}. In these papers,
a zero element $z$ is allowed to be contained in a commutative 3-field (so that if a
multiplicative unit exists, the binary situation is covered as well). The aim of the following
is to focus on the less familiar situation and to require, right from the start,
that no zero exists in a unital 3-field.

\begin{definition}
A $3$-ring is called a \emph{$3$-field} iff it is a group with respect to multiplication,
and it is called \emph{unital} iff it contains a multiplicatively neutral element
(so that its multiplicative group is derived).
\end{definition}

Stated differently, a unital 3-field simultaneously carries the structure of a binary multiplicative
and ternary additive group which have to cooperate through the distributive law.
The reader should also note that the requirement that each element possesses a multiplicative inverse
excludes the possibility that the underlying additive ternary group contains a zero element.
It is possible, though, that there are neutral elements for the ternary additive group underlying
a unital 3-field. The fields containing such an element are precisely the ones of characteristic 1
(see Proposition \ref{char-1-char}).

\begin{example}
The simplest example for a unital 3-field in which every element is additively neutral
is $\{1,x\}$ where (necessarily) $x^2=1$. Note that it is only required here that one of these
elements is additively neutral. Furthermore, there is only one more unitel 3-field with 2 elements,
the field $\{1,3\}$ inheriting its structure from $\mathbb{Z}\diagup4\mathbb{Z}$ (see the example
below).
\end{example}
\begin{example}
There are $(2,3)$ fields without a (multiplicative) unit:
$i\mathbb{R}$ becomes a $\left(2,3\right)$-field, when equipped with binary addition
and multiplication inherited from the complex number field.
\end{example}

\begin{example}
The existence of a zero element in (3,3)-fields is not automatic, even if neutral elements abound:
Let $\Phi_2=\{x,y\}$ and assume both elements are additively,
as well as multiplicatively neutral. This yields the results of all possible ternary products
and additions. It is also straightforward to check that the distributive law holds. But
neither $x$ nor $y$ is a zero element for this field. Note that also any cartesian
power of $\Phi_2$ is a (3,3) field in which all elements are additively and multiplicatively
neutral. Using the fundamental theorem
of finitely generated abelian groups one can see that each field with this property
necessarily has $2^k$ elements.
\end{example}

\begin{example}
\label{ex-odd}For a more general construction, start with a unital
3-field $\mathbb{F}$. Fix a unital 3-subfield $\mathbb{F}_1$ as well as an element
$t\in \mathbb{F}\setminus \mathbb{F}_1$
so that $t^2\in \mathbb{F}_1$. Then $t\mathbb{F}_1$ is a $\left(3,3\right)$-field in
which none of the algebraic operations is derived.
\end{example}

\begin{example}
A  set of finite unital $3$-fields is given by
\begin{equation}
\left(  \mathbb{Z}\diagup2^{n}\mathbb{Z}\right)  ^{\mathrm{odd}}=\left\{
2k+1\in\mathbb{Z}\diagup2^{n}\mathbb{Z}\mid0\leq2k+1\leq2^{n}\right\}  .
\end{equation}
The fact that each element has a multiplicative inverse follows from the fact
that $\gcd\left(a,2^{n}\right)=1$, for all
$a\in\left(\mathbb{Z}\diagup2^{n}\mathbb{Z}\right)^{\mathrm{odd}}$.
\end{example}

Recall that a cancellative and commutative $3$-ring $\mathcal{R}$ is called a
$3$-integral domain \cite{cro/tim}.

\begin{example}[$3$-field of fractions \cite{cro/tim}]\label{field of fractions}
For any $3$-integral domain in which neither ternary addition nor multiplication are
derived the $3$-field of fractions as defined in \cite{cro/tim} is a proper $3$-field.
For instance, starting with
\[
\mathbb{Z}^{\mathrm{odd}}=\left\{  2k+1\mid k\in\mathbb{Z}\right\}  ,
\]
we arrive at the proper $3$-field
\begin{equation}
\mathbb{Q}^{\mathrm{odd}}=\left\{  r\in\mathbb{Q}\mid\exists p,q\in
\mathbb{Z}^{\mathrm{odd}},r=\dfrac{p}{q}\right\}  . \label{qodd}
\end{equation}

\end{example}

Trying to find a completion of $\mathbb{Q}^{\mathrm{odd}}$ which itself is a
proper 3-field one has to avoid a zero element in the process. The easiest way
to do this seems to be to exploit the relationship of $\mathbb{Q}
^{\mathrm{odd}}$ with the field of dyadic numbers, $\mathbb{Q}_{2}$. Recall the
definition of the absolute value $\left\vert \cdot\right\vert _{2}$. If $\dfrac
{p}{q}=2^{r}\dfrac{p_{0}}{q_0}$, where neither of the integers $p_0$ and $q_0$
is divisible by 2, we have
\begin{equation}
\left\vert \dfrac{p}{q}\right\vert _{2}=2^{-r}.
\end{equation}
Completion of $\mathbb{Q}$ w.r.t. $\left\vert \cdot\right\vert _{2}$ results
in the field $\mathbb{Q}_{2}$, the elements of which can be formally written as
\begin{equation}
x=\sum_{r\geq-n_{0}}\varepsilon_{r}2^{r},\ \ \ \varepsilon_{r}\in
\mathbb{Z}\diagup2\mathbb{Z}.
\end{equation}
and $\left\vert x\right\vert _{2}=2^{n_{0}}$. Then $\left\vert x\right\vert
_{2}=1$, iff
\begin{equation}
x=1+\sum_{r=1}^{\infty}\varepsilon_{r}2^{r},\ \ \ \varepsilon_{r}\in
\mathbb{Z}\diagup2\mathbb{Z}.
\end{equation}

\begin{example}
The set $\mathbb{Q}_{2}^{\mathrm{odd}}=\left\{  x\in\mathbb{Q}_{2}
\mid\left\vert x\right\vert _{2}=1\right\}  $ is a unital $3$-field w.r.t
multiplication and ternary addition inherited from $\mathbb{Q}_{2}$. This
field is the completion of $\mathbb{Q}^{\mathrm{odd}}$ w.r.t. $\left\vert
\cdot\right\vert _{2}$. Note that this 3-field is compact. Furthermore,
similar to the binary case, $\mathbb{Q}_{2}^{\mathrm{odd}}$ is an inverse
limit $\mathbb{Q}_{2}^{\mathrm{odd}}=\lim\limits_{\longleftarrow}\left(
\mathbb{Z}\diagup2^{n}\mathbb{Z}\right)  ^{\mathrm{odd}}$.
\end{example}

\section{Pairs}

One of the striking differences of the ternary in comparison to to the binary theory is the
fact that for 3-fields there exists a meaningful ideal theory (which will be
initiated in the following section). Central to this concept in classical theories
is the 1-1 correspondence between ideals and kernels of morphisms, a
relationship seemingly doomed in the absence of a zero element.

The purpose of the present section is to find a zero element for a unital 3-field
$\mathbb{F}$, not too far away from $\mathbb{F}$, by embedding it into a
binary unitary ring, in an essentially unique way, and
careful enough so that the essential structure of $\mathbb{F}$ is preserved.

The starting point is the following definition in which we will strip down the
notation and use plus signs even at places where we deal with a ternary
addition. As it will turn out, this might be justified by the fact that the ternary
addition of any 3-field actually is induced by the binary addition of a canonical
binary ring.

\begin{definition}
Let $\mathcal{R}$ be a unital $3$-ring, and for $a,b\in\mathcal{R}$ let
\begin{equation*}
q_{a,b}: \mathcal{R}\to\mathcal{R},\quad
x\longrightarrow x+a+b
\end{equation*}
We will furthernmore use the notation
\begin{equation*}
\mathcal{Q}\left(  \mathcal{R}\right)  =\left\{q_{a,b}\mid a,b\in\mathcal{R}\right\}
\quad\text{and}\quad
\mathcal{U}\left(\mathcal{R}\right)  =\mathcal{Q}\left(  \mathcal{R}\right)  \cup\mathcal{R}
\end{equation*}
\end{definition}

Note that the above definition yields an equivalence relation on the Cartesian
product $\mathcal{R}\times\mathcal{R}$. Similar constructions go back at least
as far as \cite{pos}, serving to \textquotedblleft reduce
arity\textquotedblright.

First, we convert the set of pairs into a (binary) ring. In order to reduce
the technical effort we write pairs in their \emph{standard forms}:
For each pair we have
\[
q_{a,b}=q_{a+b-1,1},
\]
and whenever $q_{s,1}=q_{t,1}$ then $s=t$. With this
notation we (well-)define \textit{binary addition} $+_{q}$ and the
\textit{binary product} $\times_{q}$ for pairs through
\begin{align}
q_{\alpha,1}+_{q}q_{\beta,1}  &  =q_{\alpha
+\beta+1,1},\\
q_{\alpha,1}\times_{q}q_{\beta,1}  &  =q
_{\alpha+\beta+\alpha\beta,1}.
\end{align}

We extend these operations to $\mathcal{U}\left(  \mathcal{R}\right)  $. For
$u,v\in\mathcal{U}\left(  \mathcal{R}\right)  $ let
\begin{align}
u+_{\mathcal{U}}v  &  =\left\{
\begin{array}
[c]{c}
q_{u,v},\ \ \ u,v\in\mathcal{R},\\
u\left(  v\right)  =a+b+v,\ \ \ u=q_{a,b}\in\mathcal{Q}\left(
\mathcal{R}\right)  ,v\in\mathcal{R},\\
u+_{q}v,\ \ \ \ \ u,v\in\mathcal{Q}\left(  \mathcal{R}\right)  ,
\end{array}
\right. \\
u\times_{\mathcal{U}}v  &  =\left\{
\begin{array}
[c]{c}
uv,\ \ \ u,v\in\mathcal{R},\\
q_{av,bv},\ \ \ u\in\mathcal{Q}\left(  \mathcal{R}\right)
,v\in\mathcal{R},\\
u\times_{q}v,\ \ \ \ \ u,v\in\mathcal{Q}\left(  \mathcal{R}\right)  ,
\end{array}
\right.
\end{align}

These operations are well-defined and we furthermore have

\begin{theorem}\label{U(R)}
\label{theor-uq}$\left\langle \mathcal{Q}\left(  \mathcal{R}\right)
,+_{q},\times_{q}\right\rangle $ and $\left\langle \mathcal{U}\left(
\mathcal{R}\right)  ,+_{u},\times_{u}\right\rangle $ are binary rings,
$\mathcal{U}\left(  \mathcal{R}\right)  $ is unital of which $\mathcal{R}$ is
a subring, whereas $\mathcal{Q}\left(  \mathcal{R}\right)  $ is an ideal.
\end{theorem}

For the somewhat lengthy proof it is very convenient to use pairs in their standard forms, and
then there are no major obstacles, although, due to the definition of the algebraic operations,
quite a number different cases have to be distinguished.

\begin{example}
Let $\mathcal{R}=\mathbb{Z}^{\mathrm{odd}}$. Then $\varphi:$ $\mathcal{Q}
\left(  \mathcal{R}\right)  \rightarrow\mathbb{Z}^{\mathrm{even}}$,
$q_{a,b}\overset{\varphi}{\longmapsto}a+b$, is a well-defined
isomorphism of binary (nonunital) rings, and $\mathcal{U}\left(
\mathcal{R}\right)  $ equals $\mathbb{Z}$. Similarly, for the unital $3$-field
$\mathbb{Q}^{\mathrm{odd}}$ we have
\begin{equation}
\mathcal{Q}\left(  \mathbb{Q}^{\mathrm{odd}}\right)  =\mathbb{Q}
^{\mathrm{even}}=\left\{  r\in\mathbb{Q}\mid\exists p\in\mathbb{Z}
^{\mathrm{even}},q\in\mathbb{Z}^{\mathrm{odd}},r=\dfrac{p}{q}\right\}  ,
\end{equation}
as well as
\begin{equation}
\mathcal{U}\left(  \mathbb{Q}^{\mathrm{odd}}\right)  =\mathbb{Q}
^{\mathrm{even}}\cup\mathbb{Q}^{\mathrm{odd}}=\left\{  r\in\mathbb{Q}
\mid\exists p\in\mathbb{Z},q\in\mathbb{Z}^{\mathrm{odd}},r=\dfrac{p}
{q}\right\}  .
\end{equation}

In the same vein,
\begin{equation}
\mathcal{Q}\left(  \left(  \mathbb{Z}\diagup2^{n}\mathbb{Z}\right)
^{\mathrm{odd}}\right)  =\left(  \mathbb{Z}\diagup2^{n}\mathbb{Z}\right)
^{\mathrm{even}},\ \ \ \ \ \mathcal{U}\left(  \left(  \mathbb{Z}\diagup
2^{n}\mathbb{Z}\right)  ^{\mathrm{odd}}\right)  =\left(  \mathbb{Z}
\diagup2^{n}\mathbb{Z}\right)  .
\end{equation}

\end{example}

For the following observations it is convenient to use categorical language.
We denote by $\mathfrak{F}_{3}$ the category in which the objects are unital $3$-fields
and where the morphisms are given by mappings between those which respect the additive
and the multiplicative  structure as well as the unit element. For each
morphism between unital 3-fields $\phi:\mathbb{F}^{\left(  1\right)  }\rightarrow\mathbb{F}^{\left(
2\right)  }$ define mappings $\mathcal{Q}\phi:\mathcal{Q}\mathbb{F}^{\left(
1\right)  }\rightarrow\mathcal{Q}\mathbb{F}^{\left(  2\right)  }$ and
$\mathcal{U}\phi:\mathcal{U}\mathbb{F}^{\left(  1\right)  }\rightarrow
\mathcal{U}\mathbb{F}^{\left(  2\right)  }$, by
\begin{align}
\mathcal{Q}\phi\left(  q_{a,b}\right)   &  =q_{\phi\left(
a\right)  ,\phi\left(  b\right)  },\\
\mathcal{U}\phi\left(  u\right)   &  =\left\{
\begin{array}
[c]{c}
\mathcal{Q}\phi\left(  u\right)  ,\ \ \ u\in\mathcal{Q}\mathbb{F}^{\left(
1\right)  },\\
\phi\left(  u\right)  ,\ \ \ u\in\mathbb{F}^{\left(  1\right)  }.
\end{array}
\right.
\end{align}

It is then requires only to invoke the involved defintions to see that
$\mathcal{Q}\phi$ and $\mathcal{U}\phi$ are unital
morphisms (with respect to binary addition and multiplication defined on these
rings) so that we have defined a functor $\mathcal{U}$ from the category
$\mathfrak{F}_{3}$ to the category of binary unital rings $\mathfrak{R}_{2}$.

Our next goal is to show that the embedding of a unital ternary field $F$
into $U(F)$ is essentially the only way of embedding it into a binary unital ringt,
in the following sense.

\begin{theorem}[Universality theorem]
Suppose $\mathcal{R}$ is a unital binary ring, equipped with the induced structure
of a ternary unital ring, $\mathbb{F}$
is a unital $3$-field, $\varphi:\mathbb{F}\mathbb{\rightarrow}\mathcal{R}$
is a morphism of unital $3$-rings, and write $i_{F}$ for the embedding $\mathbb{F}
\mathbb{\rightarrow}\mathcal{U}\left(  \mathbb{F}\right) $.

Then, there exists a morphism $\bar{\varphi
}:\mathcal{U}\left(  \mathbb{F}\right)  \mathbb{\rightarrow}\mathcal{R}$ of
binary rings, such that $\bar{\varphi}\circ i_{F}=\varphi$.
\end{theorem}

\begin{proof} Define for $R$ (which was supposed to carry its induced unital 3-ring
structure) a map $\pi_R: U(R)\to R$ by $\pi_R(s)=r$ if $s=r\in R$ and $\pi_R(s)=1+a$
if $s=q_{1,a}\in \mathcal{Q}(R)$. Then it is straightforward to check that $\pi_R$ is a unital ring
morphism.\footnote{This somewhat tedious proof consists in going through a number
of cases; for example, if $s_{1,2}=r_{1,2}\in R$ then
\begin{equation*}
\pi_R(s_1+s_2)=\pi_R(q_{1,r_1+r_2-1})=r_1+r_2=\pi_R(s_1)+\pi_R(s_2).
\end{equation*}
All other cases are checked similarly.
}
If we now put $\bar{\varphi}=\pi_R U(\phi)$ it follows that $\bar{\varphi}\circ i_{F}=\varphi$.
\end{proof}

In the next result we will have a closer look at those rings that arise as $\mathcal{U}(F)$,
for a unital 3-field $F$. Recall that a (commutative, binary) ring is called a \emph{local ring} iff
it contains  a unique maximal ideal\footnote{The name for these rings comes from one
of their most prominent examples: Consider the set of continuous functions defined in
the neighbourhood of a point $k$ of some compact space $K$. Identifying two
functions that coincide on a neighbourhood of $k$, yields a ring (with structure inherited
from the ring of continuous functions) in which (the equivalence class of) those functions
vanishing at $k$ form a unique maximal ideal. See e.g.\ \cite[p.111]{Jacobson2} for more.}

\begin{theorem}
[$3$-fields and local rings]\
\begin{enumerate}
\item
Let $\mathcal{R}$ be a (unital) local binary
ring with (unique) maximal ideal $\mathcal{J}$ so that $\mathcal{R\diagup
J\cong}\mathbb{Z}\diagup 2\mathbb{Z}$. Then $\mathcal{R\setminus J}$ is a unital $3$-field.
\item
For any unital $3$-field $\mathbb{F}$, there exists a local binary ring
$\mathcal{R}$ with residual field $\mathbb{Z}\diagup 2\mathbb{Z}$ such that $\mathbb{F}
\mathbb{\cong}\mathcal{R\setminus J}$, where $\mathcal{J}$ is the maximal
ideal of $\mathcal{R}$ and $\mathcal{R\setminus J}$ carries the derived
ternary structure inherited from $\mathcal{R}$.
\end{enumerate}
\end{theorem}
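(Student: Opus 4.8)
The plan is to prove the two directions separately, using the functor $\mathcal{U}$ of Section~2 for the converse. For (1), I first recall that in a local ring the non-units are exactly the elements of the maximal ideal, so $\mathcal{R}\setminus\mathcal{J}$ is the unit group $\mathcal{R}^{\ast}$; I equip it with the inherited operations $\nu(x,y,z)=x+y+z$ and $\mu(x,y,z)=xyz$. The only non-formal point is closure of $\nu$: because $\mathcal{R}\diagup\mathcal{J}\cong\mathbb{Z}\diagup2\mathbb{Z}$, an element lies in $\mathcal{R}\setminus\mathcal{J}$ iff it reduces to $1$ modulo $\mathcal{J}$, whence $x+y+z$ reduces to $1+1+1=1$ and is again a unit; closure of $\mu$ is clear. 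The remaining axioms transport directly from $\mathcal{R}$: total associativity of $\nu,\mu$ and commutativity of $\nu$ (and of $\mu$, when $\mathcal{R}$ is commutative) come from the corresponding ring properties, the ternary distributive laws from the ring distributive law, and $\langle\mathcal{R}\setminus\mathcal{J},\nu\rangle$ is a commutative ternary group since the querelement of $x$ is $-x$ (a unit, as $-1\equiv1$ mod $\mathcal{J}$) and $\nu(a,b,x)=c$ has the unique solution $x=c-a-b$, again a unit. Finally $1$ is a multiplicative unit and each unit $x$ has a binary inverse serving as $\bar{x}$, so $\langle\mathcal{R}\setminus\mathcal{J},\mu\rangle$ is a group; thus $\mathcal{R}\setminus\mathcal{J}$ is a unital $3$-field.

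For (2) the natural candidate is $\mathcal{R}:=\mathcal{U}(\mathbb{F})$ with $\mathcal{J}:=\mathcal{Q}(\mathbb{F})$: by Theorem~\ref{theor-uq} this is a unital binary ring in which $i_F(\mathbb{F})$ is a subring and $\mathcal{J}$ an ideal, and set-theoretically $\mathcal{R}=\mathcal{J}\cup i_F(\mathbb{F})$ is a disjoint union. The crux is to show $\mathcal{R}$ is local with maximal ideal $\mathcal{J}$, and this is exactly where the field hypothesis enters. Since $\mathbb{F}$ is unital its multiplication is derived from a binary product under which $\langle\mathbb{F},\cdot\rangle$ is a group, so every $x\in\mathbb{F}$ has a binary inverse $x^{-1}$ with $xx^{-1}=1$; as $i_F$ respects this product, every element of $i_F(\mathbb{F})$ is a unit of $\mathcal{R}$. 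Because $1=i_F(1)\notin\mathcal{J}$, the ideal $\mathcal{J}$ is proper and hence contains no unit, so the non-units of $\mathcal{R}$ are precisely $\mathcal{J}$; a ring whose non-units form an ideal is local with that ideal as its unique maximal ideal.

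It remains to identify the residue field and the isomorphism. Let $\pi:\mathcal{R}\to\mathcal{R}\diagup\mathcal{J}$ be the quotient map. The defining relation $u+_{\mathcal{U}}v=\mathbf{q}_{u,v}\in\mathcal{J}$ for $u,v\in\mathbb{F}$ gives $\pi(u)=-\pi(v)$ for all such $u,v$; fixing $v=1$ collapses all of $i_F(\mathbb{F})$ to the single value $-\pi(1)$, while $\pi(1)=-\pi(1)$ shows the quotient has characteristic $2$. As $\mathcal{R}=\mathcal{J}\cup i_F(\mathbb{F})$, the quotient $\mathcal{R}\diagup\mathcal{J}$ has exactly two elements and is therefore isomorphic to $\mathbb{Z}\diagup2\mathbb{Z}$. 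Finally $i_F$ is an isomorphism of $3$-fields onto $\mathcal{R}\setminus\mathcal{J}$ for the derived structure, since the piecewise definition of the operations yields $i_F(x)+_{\mathcal{U}}i_F(y)+_{\mathcal{U}}i_F(z)=\mathbf{q}_{x,y}(z)=x+y+z=\nu(x,y,z)$ and $i_F(x)\times_{\mathcal{U}}i_F(y)\times_{\mathcal{U}}i_F(z)=xyz=\mu(x,y,z)$.

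The main obstacle is the locality step in (2): one must genuinely use that $\mathbb{F}$ is a field rather than a mere unital $3$-ring, for it is exactly the invertibility of every element of $\mathbb{F}$ that confines the non-units to $\mathcal{Q}(\mathbb{F})$. For a general unital $3$-ring this fails; for instance $\mathcal{U}(\mathbb{Z}^{\mathrm{odd}})=\mathbb{Z}$ is not local, so the locality and residue-field conclusions really depend on passing from $3$-rings to $3$-fields.
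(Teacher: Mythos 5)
Your proposal is correct and follows essentially the same route as the paper: part (1) by reducing modulo $\mathcal{J}$ to get closure and using the local-ring characterization of units (the paper phrases this via Krull's theorem), and part (2) by taking $\mathcal{R}=\mathcal{U}(\mathbb{F})$, $\mathcal{J}=\mathcal{Q}(\mathbb{F})$, deducing locality from invertibility of all elements of $\mathbb{F}$, and exhibiting exactly two residue classes. Your explicit verification that $i_{F}$ identifies $\mathbb{F}$ with $\mathcal{R}\setminus\mathcal{J}$ as $3$-fields is a detail the paper leaves implicit, but it is the same argument.
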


\begin{proof}
(1) Let $\pi_{\mathcal{J}}:\mathcal{R}\rightarrow\mathcal{R\diagup J\ }$ be
the quotient map. Suppose $a_{1,2,3}\in\mathcal{R\setminus J}$. Since
$a\in\mathcal{R\setminus J}$, iff $\pi_{\mathcal{J}}\left(  a\right)  $ is in
$\mathbb{Z}\diagup 2\mathbb{Z}$, it follows that $a_{1}+a_{2}+a_{3}\in\mathcal{R\setminus J}
$. It is straightforward to check that $\mathcal{R\setminus J}$ is an additive
$3$-group. Similarly, the product $a_{1}a_{2}\in\mathcal{R\setminus J}$, and
distributivity is satisfied. It remains to show that each $a\in
\mathcal{R\setminus J\ }$ has a multiplicative inverse. Suppose $a$ has no
inverse, by Krull's theorem it is contained in a maximal ideal different from
$\mathcal{J}$, thus contradicting the locality of $\mathcal{R}$.

(2) Let $\mathbb{F}$ be a unital $3$-field. By {Theorem \ref{theor-uq}
} $\mathcal{U}\left(  \mathbb{F}\right)  $ is a binary unital ring. We show
that $\mathcal{Q}\left(  \mathbb{F}\right)  $ is a unique maximal ideal with
$\mathcal{U}\left(  \mathbb{F}\right)  \mathcal{\diagup Q}\left(
\mathbb{F}\right)  =\mathbb{Z}\diagup 2\mathbb{Z}$. Evidently, $\mathcal{Q}\left(
\mathbb{F}\right)  $ is an ideal of $\mathcal{U}\left(  \mathbb{F}\right)  $.
As all elements in $\mathcal{U}\left(  \mathbb{F}\right)  \mathcal{\setminus
Q}\left(  \mathbb{F}\right)  $ are invertible, this ideal has to be maximal.
By the same reason, $\mathcal{Q}\left(  \mathbb{F}\right)  $ is the only
maximal ideal. So $\mathcal{U}\left(  \mathbb{F}\right)  $ is a local ring.

It remains to show that $\mathcal{U}\left(  \mathbb{F}\right)
\mathcal{\diagup Q}\left(  \mathbb{F}\right)  =\mathbb{Z}\diagup 2\mathbb{Z}$. Take
$r\in\mathcal{U}\left(  \mathbb{F}\right)  $. If $r\in\mathbb{F}$, then
$r+\mathcal{Q}\left(  \mathbb{F}\right)  =1+\mathcal{Q}\left(  \mathbb{F}
\right)  $, because $r+\bar{r}+1=1$, and therefore $r\sim1$. If $r\in
\mathcal{Q}\left(  \mathbb{F}\right)  $, then, of course, $r+\mathcal{Q}
\left(  \mathbb{F}\right)  =0+\mathcal{Q}\left(  \mathbb{F}\right)  $, i.e.
$r\sim0$. So there are only two equivalence classes and hence $\mathcal{U}
\left(  \mathbb{F}\right)  \mathcal{\diagup Q}\left(  \mathbb{F}\right)
=\mathbb{Z}\diagup 2\mathbb{Z}$.
\end{proof}

It is not difficult to see that the functor $\mathcal{U}$ actually establishes an
equivalence of the categories of unital local binary rings with residual field
$\mathbb{Z}\diagup 2\mathbb{Z}$ and the category of unital 3-fields.

\begin{example}
In the case of $\mathbb{Q}^{\mathrm{odd}}_{2}$, its local ring is the
valuation ring $\mathcal{O}\left(  \mathbb{Q}_{2}\right)  =\left\{
z\in\mathbb{Q}_{2}\mid\left\vert z\right\vert _{2}\leq1\right\}  $ with
(maximal) evaluation ideal $\mathcal{B}\left(  \mathbb{Q}_{2}\right)
=\left\{  z\in\mathbb{Q}_{2}\mid\left\vert z\right\vert _{2}<1\right\}  $, and
$\mathbb{Q}^{\mathrm{odd}}_{2}=\mathcal{O}\left(  \mathbb{Q}_{2}\right)
\setminus\mathcal{B}\left(  \mathbb{Q}_{2}\right)  $.
\end{example}

Here is another application:

\begin{theorem}\label{characterizing embedding}
For any unital 3-field $\mathbb{F}$ the following are equivalent.

\begin{enumerate}
\item There exists an embedding of $\mathbb{F}$ into a binary field
$\mathbb{K}$, where the latter is supposed to carry its derived ternary structure.

\item $\mathcal{Q}(\mathbb{F})$ is an integral domain.

\item For each $y\neq1$ the equation
\begin{equation}
x+y-xy=1
\end{equation}
has the only solution $x=1$.
\end{enumerate}
\end{theorem}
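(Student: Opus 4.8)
The plan is to transport everything to the binary local ring $\mathcal{R}=\mathcal{U}(\mathbb{F})$ produced by Theorem~\ref{theor-uq} and the theorem on $3$-fields and local rings: $\mathcal{R}$ is local with maximal ideal $\mathcal{J}=\mathcal{Q}(\mathbb{F})$, residual field $\mathbb{Z}\diagup 2\mathbb{Z}$, and $\mathbb{F}=\mathcal{R}\setminus\mathcal{J}$ carries the derived ternary structure. I would first record two elementary facts. Since every element outside $\mathcal{J}$ is a unit, any zero divisor of $\mathcal{R}$ lies in $\mathcal{J}$; hence condition (2), that $\mathcal{Q}(\mathbb{F})$ has no zero divisors, is equivalent to $\mathcal{R}$ being an integral domain. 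Secondly, $x\mapsto x-1$ is a bijection of $\mathbb{F}=\mathcal{R}\setminus\mathcal{J}$ onto $\mathcal{J}$: it lands in $\mathcal{J}$ because every unit is $\equiv 1\pmod{\mathcal{J}}$, and its inverse is $j\mapsto j+1$, which is a unit since $j+1\notin\mathcal{J}$ (otherwise $1=(j+1)-j\in\mathcal{J}$). Under this bijection $x=1$ corresponds to $0\in\mathcal{J}$.

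With these in hand, (2)$\Leftrightarrow$(3) is a direct computation. In $\mathcal{R}$ one has $(x-1)(y-1)=xy-x-y+1$, so $x+y-xy=1$ if and only if $(x-1)(y-1)=0$. Letting $x,y$ range over $\mathbb{F}$ and applying the bijection above, the assertion ``for every $y\neq 1$ the only solution of $x+y-xy=1$ is $x=1$'' says exactly that $uv=0$ with $u,v\in\mathcal{J}$ and $v\neq 0$ forces $u=0$, i.e.\ that $\mathcal{J}$ has no zero divisors. This is (2).

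For (2)$\Leftrightarrow$(1) I would use the field of fractions. If (2) holds then $\mathcal{R}$ is an integral domain by the first remark, so it embeds into $\mathbb{K}=\operatorname{Frac}(\mathcal{R})$; restricting this ring embedding to $\mathbb{F}=\mathcal{R}\setminus\mathcal{J}$ yields an injection into $\mathbb{K}$ that preserves both ternary operations, because these are the derived ones and a ring homomorphism respects $a+b+c$ and $abc$. Thus $\mathbb{F}$ embeds into the binary field $\mathbb{K}$ with its derived ternary structure, which is (1). Conversely, assume $\varphi\colon\mathbb{F}\hookrightarrow\mathbb{K}$ is such an embedding. By the Universal functor theorem $\varphi$ extends to a ring homomorphism $\bar{\varphi}\colon\mathcal{R}\to\mathbb{K}$ with $\bar{\varphi}\circ i_F=\varphi$, so in particular $\bar{\varphi}(1)=\varphi(1)=1$. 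Its kernel is a proper ideal of the local ring $\mathcal{R}$, hence contained in $\mathcal{J}$. To see it is trivial, take $z\in\ker\bar{\varphi}$ and write $z=x-1$ with $x\in\mathbb{F}$; then $0=\bar{\varphi}(z)=\varphi(x)-1$, so $\varphi(x)=1=\varphi(1)$, whence $x=1$ by injectivity of $\varphi$ and $z=0$. Therefore $\bar{\varphi}$ is injective, $\mathcal{R}$ is a subring of a field, hence an integral domain, and (2) follows.

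The computational heart is the factorisation $x+y-xy=1\Leftrightarrow(x-1)(y-1)=0$ together with the identification $\mathcal{J}=\{\,x-1:x\in\mathbb{F}\,\}$; once these are available each implication is short. The step I expect to need the most care is the injectivity of the extension $\bar{\varphi}$ in (1)$\Rightarrow$(2): the Universal functor theorem only guarantees that $\bar{\varphi}$ is a homomorphism, and it is the combination of locality, which forces $\ker\bar{\varphi}\subseteq\mathcal{J}$, with the bijection $x\mapsto x-1$ that upgrades injectivity of $\varphi$ on $\mathbb{F}$ to injectivity on all of $\mathcal{R}$.
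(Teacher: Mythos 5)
Your proof is correct, but it runs through different machinery than the paper's. The paper stays with the ideal $\mathcal{Q}(\mathbb{F})$ itself: for (2)$\Rightarrow$(1) it forms the field of quotients of $\mathcal{Q}(\mathbb{F})$ and writes down the explicit embedding $\Psi(x)=q_{x,x}/q_{1,1}$, and for (1)$\Rightarrow$(2) it simply applies the functor $\mathcal{Q}$ to the given embedding $\mathbb{F}\to\mathbb{K}$, observing that $\mathcal{Q}(\mathbb{F})$ then injects into $\mathcal{Q}(\mathbb{K})\cong\mathbb{K}$, an integral domain; the equivalence (2)$\Leftrightarrow$(3) is dismissed as ``easily checked.'' You instead lift everything to the local ring $\mathcal{R}=\mathcal{U}(\mathbb{F})$, and your two preparatory observations --- that by locality $\mathcal{Q}(\mathbb{F})$ is a domain iff $\mathcal{R}$ is, and that $x\mapsto x-1$ is a bijection $\mathbb{F}\to\mathcal{J}$ --- do real work that has no counterpart in the paper. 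This buys you two things: a genuine written-out proof of (2)$\Leftrightarrow$(3) via the factorization $x+y-xy=1\Leftrightarrow(x-1)(y-1)=0$, which the paper omits, and a (1)$\Rightarrow$(2) argument that establishes the stronger fact that the extension $\bar{\varphi}$ of the universal property is injective on all of $\mathcal{R}$. The cost is length: the paper's functorial one-liner for (1)$\Rightarrow$(2) avoids the kernel analysis entirely (and, incidentally, your independent route sidesteps the paper's typo, where $\mathcal{Q}(\mathbb{F})$ is said to inject into $\mathcal{Q}(\mathbb{F})$ rather than $\mathcal{Q}(\mathbb{K})$). Note also that your two fraction fields agree with the paper's: since $2=q_{1,1}$ and $2x=q_{x,x}$ lie in $\mathcal{Q}(\mathbb{F})$, one has $\operatorname{Frac}(\mathcal{Q}(\mathbb{F}))=\operatorname{Frac}(\mathcal{U}(\mathbb{F}))$, and the paper's $\Psi(x)=q_{x,x}/q_{1,1}=2x/2$ is exactly your restriction map, so the two proofs of (2)$\Rightarrow$(1) construct the same embedding in different notation.
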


\begin{proof}
Writing down what it means for $\mathcal{Q}(\mathbb{F})$ to be an integral
domain, using standard forms of pairs, the equivalence of (2) and (3) is easily
seen. Suppose that
$\mathcal{Q}(\mathbb{F})$ is an integral domain, and denote by $\mathbb{K}$
its field of quotients. Define
\begin{equation}
\Psi: \mathbb{F}\to\mathbb{K},\qquad\Psi(x)=\frac{q_{x,x}}{q_{1,1}}
\end{equation}
Then $\Psi$ is injective as $\Psi(x)=\Phi(y)$ is equivalent to
\begin{equation}
q_{x+x,x+x}=q_{x,x}q_{1,1}=q_{y,y}q_{1,1}=q_{y+y,y+y}
\end{equation}
which by how pairs were defined is the same as $x=y$.
The map $\Psi$ is also multiplicative, because $\Psi(x)\Psi(y)=\Psi(xy)$ is
equivalent to
\begin{equation}
q_{xy+xy,xy+xy}q_{1,1}=q_{xy,xy}q_{1+1,1+1}
\end{equation}
where on both sides the same pair is written in a slightly different form.
Additivity is proven in a similar way.

Conversely, whenever there exists an
embedding $\mathbb{F}\to\mathbb{K}$, $\mathcal{Q}(\mathbb{F})$ is injectively
mapped into $\mathcal{Q}(\mathbb{K})$, which is an integral domain.
\end{proof}

\section{Ideals}

Because of the absence of zero in a proper $3$-ring, the usual correspondence
between ideals and kernels of morphisms is no longer available. Instead, we
apply the results of the previous section.

Let us consider a morphism of unital $3$-rings $\phi:\mathcal{R}
_{1}\rightarrow\mathcal{R}_{2}$. Then $\ker\mathcal{U}\left(\phi\right)$
is an ideal of $\mathcal{U}\left(\mathcal{R}_{1}\right)$, and the
underlying equivalence relation on $\mathcal{R}_{1}$ is given by
\begin{equation}
r_{1}\sim r_{2}\Longleftrightarrow\exists q\in\ker\mathcal{U}\left(
\phi\right)  :r_{1}+q=r_{1}.
\end{equation}
Note that $\ker\mathcal{U}\left(\phi\right)$ is contained in
$\mathcal{Q}\left(  \mathcal{R}_{1}\right)$, and so $q$ must be an
additive pair. The above is a motivation for

\begin{definition}
An \emph{ideal} for a unital $3$-ring $\mathcal{R}$ is any (binary) ideal of
$\mathcal{Q}\left(\mathcal{R}\right)$. We furthermore denote the quotient
an ideal of $\mathcal{Q}(\mathcal{R})$ defines on
$\mathcal{R}$ by $\mathcal{R}\diagup\mathcal{J}$.
\end{definition}
Note that an ideal $\mathcal{J}$ intersecting $\mathcal{R}$ would lead to
a zero element in the quotient $\mathcal{R}\diagup\mathcal{J}$ and thus
to a $3$-ring whose additive structure is induced by a binary addition,
a case that we are excluding here.
Defining ideals in this way, the homomorphism theorem for 3-rings has a proof
almost identical to the binary case.
\begin{proposition}
\label{prop-q}Suppose $\mathcal{R}$ and $\mathcal{S}$ are unital 3-rings,
and $\phi:\mathcal{R}\to \mathcal{S}$ is a morphism.
Then the quotient $\mathcal{R}\diagup\ker\mathcal{U}\left(\phi\right)$
is a unital $3$-ring, and
$\mathcal{R}\diagup\ker\mathcal{U}\left(\phi\right)\simeq\operatorname{Im}\phi$.
\end{proposition}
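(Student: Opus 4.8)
The plan is to reduce the statement to the ordinary first isomorphism theorem for binary rings, applied to the homomorphism $\mathcal{U}(\phi)\colon\mathcal{U}(\mathcal{R})\to\mathcal{U}(\mathcal{S})$ furnished by the functoriality of $\mathcal{U}$, and then to transport the resulting binary isomorphism back to the ternary level. First I would record that, as noted in the discussion preceding the definition of an ideal, $\mathcal{J}:=\ker\mathcal{U}(\phi)$ is contained in $\mathcal{Q}(\mathcal{R})$: for $r\in\mathcal{R}$ we have $\mathcal{U}(\phi)(r)=\phi(r)\in\operatorname{Im}\phi\subseteq\mathcal{S}$, while the zero of $\mathcal{U}(\mathcal{S})$ is an operator pair and hence belongs to $\mathcal{Q}(\mathcal{S})$, so no element of $\mathcal{R}$ is annihilated. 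Being an ideal of the binary ring $\mathcal{U}(\mathcal{R})$ (Theorem \ref{theor-uq}) that lies inside $\mathcal{Q}(\mathcal{R})$, $\mathcal{J}$ is in particular a binary ideal of $\mathcal{Q}(\mathcal{R})$, i.e. an ideal of the $3$-ring $\mathcal{R}$ in the sense of the definition above, so that $\mathcal{R}/\mathcal{J}$ is meaningful.

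The key step, and the one I expect to carry the most weight, is to recognise that the ternary equivalence relation defining $\mathcal{R}/\mathcal{J}$ is simply the restriction to $\mathcal{R}$ of the binary coset relation of $\mathcal{J}$ in $\mathcal{U}(\mathcal{R})$. For $r_1,r_2\in\mathcal{R}$ the relation $r_1\sim r_2$ asks for some $\mathbf{q}\in\mathcal{J}$ with $r_1+_{\mathcal{U}}\mathbf{q}=r_2$, and since $\mathcal{J}\subseteq\mathcal{Q}(\mathcal{R})$ the sum $r_1+_{\mathcal{U}}\mathbf{q}$ is exactly the operator pair $\mathbf{q}$ evaluated at $r_1$, which lies back in $\mathcal{R}$; hence $r_1\sim r_2$ is equivalent to $r_2-r_1\in\mathcal{J}$, the honest congruence relation of $\mathcal{U}(\mathcal{R})$. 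Consequently $\mathcal{R}/\mathcal{J}$ is precisely the image of $\mathcal{R}$ under the quotient map $\mathcal{U}(\mathcal{R})\to\mathcal{U}(\mathcal{R})/\mathcal{J}$, and I would endow it with the ternary operations derived from this binary quotient ring. Because a triple $+_{\mathcal{U}}$-sum (respectively $\times_{\mathcal{U}}$-product) of elements of $\mathcal{R}$ reproduces $\nu$ (respectively $\mu$) and therefore stays in $\mathcal{R}$, the induced triple operations on cosets stay in $\mathcal{R}/\mathcal{J}$; together with the coset of $1$ this exhibits $\mathcal{R}/\mathcal{J}$ as a unital $3$-ring, which settles the first assertion.

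For the isomorphism I would use functoriality once more. The image $\operatorname{Im}\phi$ is a unital sub-$3$-ring of $\mathcal{S}$, being closed under $\nu$, $\mu$ and additive querelements and containing $\phi(1)=1$, and inspection of the definition of $\mathcal{U}\phi$ gives $\operatorname{Im}\mathcal{U}(\phi)=\mathcal{U}(\operatorname{Im}\phi)$, since the pairs occurring in the image are exactly the $\mathbf{q}_{\phi(a),\phi(b)}$ with $\phi(a),\phi(b)\in\operatorname{Im}\phi$. The binary first isomorphism theorem now produces a ring isomorphism $\Phi\colon\mathcal{U}(\mathcal{R})/\mathcal{J}\to\mathcal{U}(\operatorname{Im}\phi)$ with $\Phi(u+\mathcal{J})=\mathcal{U}(\phi)(u)$. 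Restricting $\Phi$ to the subset $\mathcal{R}/\mathcal{J}$ sends $r+\mathcal{J}$ to $\phi(r)$ and hence maps $\mathcal{R}/\mathcal{J}$ bijectively onto $\operatorname{Im}\phi$; as $\Phi$ is a binary ring isomorphism it intertwines the derived ternary operations, so its restriction is an isomorphism of unital $3$-rings $\mathcal{R}/\mathcal{J}\cong\operatorname{Im}\phi$. The only genuinely non-routine point is the identification in the second paragraph---that the ad hoc ternary quotient agrees with the binary congruence inside $\mathcal{U}(\mathcal{R})$ and collapses nothing outside $\mathcal{Q}(\mathcal{R})$---after which the whole statement is a transcription of the classical first isomorphism theorem through the functor $\mathcal{U}$.
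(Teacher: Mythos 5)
The paper states Proposition \ref{prop-q} without any proof (it is followed immediately by a remark on terminology), so your argument fills a gap the authors left to the reader; it is correct, and it is exactly the argument the surrounding text sets up: transport through the functor $\mathcal{U}$, observe that $\ker\mathcal{U}(\phi)\subseteq\mathcal{Q}(\mathcal{R})$ so that the quotient relation on $\mathcal{R}$ is the restriction of the binary coset relation of $\ker\mathcal{U}(\phi)$ in $\mathcal{U}(\mathcal{R})$, and apply the binary first isomorphism theorem together with the identification $\operatorname{Im}\mathcal{U}(\phi)=\mathcal{U}(\operatorname{Im}\phi)$. Your two auxiliary verifications --- that no element of $\mathcal{R}$ lies in the kernel (since zero of $\mathcal{U}(\mathcal{S})$ is a pair, and pairs are disjoint from $\mathcal{S}$), and that cosets of elements of $\mathcal{R}$ never merge with cosets of pairs --- are precisely the points that make the restriction of the binary isomorphism a well-defined bijection onto $\operatorname{Im}\phi$, so nothing is missing.
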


We prefer the expression \textquotedblleft an ideal \textit{for} a unital
$3$-ring\textquotedblright\ over \textquotedblleft an ideal \textit{of}
...\textquotedblright, as the former is not a subset of $\mathcal{R}$.

The following theorem is an analogue to the fact that for a binary
ring the quotient by an ideal is a field, if and only if the ideal is maximal.

\begin{theorem}\label{th-maxi}
For a unital $3$-ring $\mathcal{R}$ and an ideal
$\mathcal{I}\subseteq\mathcal{Q}\left(\mathcal{R}\right)$, the quotient
$\mathcal{R}\diagup\mathcal{I}$ is a unital $3$-field, if and only if for any proper
ideal $\mathcal{J}$ of $\mathcal{U}\left(\mathcal{R}\right)$ for which
$\mathcal{J}\supseteq\mathcal{I}$ it follows that
$\mathcal{J}\cap\mathcal{R}=\varnothing. \label{ji}$
\end{theorem}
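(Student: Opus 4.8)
The plan is to transport the whole question into the binary ring $A=\mathcal{U}(\mathcal{R})$ and then read it off from the standard local‑ring dictionary. Recall (Theorem \ref{theor-uq} together with the coset computation in the proof of the local‑ring theorem, which uses only additive querelements and the unit) that $\mathcal{Q}(\mathcal{R})$ is an ideal of $A$ with $A\diagup\mathcal{Q}(\mathcal{R})\cong\mathbb{Z}\diagup 2\mathbb{Z}$, the coset of $1$ being $\mathcal{R}$ itself; in particular $A=\mathcal{R}\sqcup\mathcal{Q}(\mathcal{R})$ and $\mathcal{R}=1+\mathcal{Q}(\mathcal{R})$. The first thing I would verify is that $\mathcal{I}$, although by definition merely an ideal of the ring $\mathcal{Q}(\mathcal{R})$, is automatically an ideal of all of $A$: writing a general element of $\mathcal{R}$ as $1+q$ with $q\in\mathcal{Q}(\mathcal{R})$, one has $(1+q)x=x+qx\in\mathcal{I}$ for every $x\in\mathcal{I}$, since $qx\in\mathcal{I}$ by the ideal property inside $\mathcal{Q}(\mathcal{R})$. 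Hence $A\diagup\mathcal{I}$ is a well-defined unital binary ring.

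Next I would identify $A\diagup\mathcal{I}$ with $\mathcal{U}(\mathcal{R}\diagup\mathcal{I})$. Under the quotient map the even part $\mathcal{Q}(\mathcal{R})$ goes to $\mathcal{Q}(\mathcal{R})\diagup\mathcal{I}=\mathcal{Q}(\mathcal{R}\diagup\mathcal{I})$ and the odd coset $\mathcal{R}=1+\mathcal{Q}(\mathcal{R})$ goes to $\mathcal{R}\diagup\mathcal{I}$, so that $\mathcal{R}\diagup\mathcal{I}$ is exactly the coset $1+\bigl(\mathcal{Q}(\mathcal{R})\diagup\mathcal{I}\bigr)$ carrying the derived ternary structure; by the third isomorphism theorem the residual field is again $\mathbb{Z}\diagup 2\mathbb{Z}$. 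With this identification in hand, the local‑ring characterization of $3$-fields (the theorem on $3$-fields and local rings together with the following remark) says that $\mathcal{R}\diagup\mathcal{I}$ is a unital $3$-field if and only if $A\diagup\mathcal{I}$ is local with maximal ideal exactly $\mathcal{Q}(\mathcal{R})\diagup\mathcal{I}$, the residual‑field requirement being automatic.

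It then remains to translate locality into the stated ideal condition. The ring $A\diagup\mathcal{I}$ is local with maximal ideal $\mathcal{Q}(\mathcal{R})\diagup\mathcal{I}$ precisely when every proper ideal of $A\diagup\mathcal{I}$ is contained in $\mathcal{Q}(\mathcal{R})\diagup\mathcal{I}$. By the lattice correspondence for quotient rings, the proper ideals of $A\diagup\mathcal{I}$ are exactly the $\mathcal{J}\diagup\mathcal{I}$ with $\mathcal{J}$ a proper ideal of $A=\mathcal{U}(\mathcal{R})$ satisfying $\mathcal{J}\supseteq\mathcal{I}$, and such a quotient lies in $\mathcal{Q}(\mathcal{R})\diagup\mathcal{I}$ if and only if $\mathcal{J}\subseteq\mathcal{Q}(\mathcal{R})$. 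Finally, because $A=\mathcal{R}\sqcup\mathcal{Q}(\mathcal{R})$ is a disjoint union, the inclusion $\mathcal{J}\subseteq\mathcal{Q}(\mathcal{R})$ is equivalent to $\mathcal{J}\cap\mathcal{R}=\varnothing$. Chaining these equivalences yields exactly the assertion of the theorem.

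The one genuinely delicate point, and the step I would spend the most care on, is the identification $\mathcal{U}(\mathcal{R}\diagup\mathcal{I})\cong A\diagup\mathcal{I}$: one must check that forming the quotient $3$-ring and then applying the functor $\mathcal{U}$ gives the same ring as quotienting $\mathcal{U}(\mathcal{R})$ by $\mathcal{I}$, i.e.\ that the even part of the quotient is genuinely $\mathcal{Q}(\mathcal{R})\diagup\mathcal{I}$, with no collapsing of the two cosets and no spurious new units. Everything after that is the routine local‑ring / maximal‑ideal correspondence. For the converse direction one can in fact bypass the functoriality altogether: if $\mathcal{R}\diagup\mathcal{I}$ is a $3$-field then, its binary multiplication $[a]\bullet[b]=\mu([a],[1],[b])$ being the one inherited from $A\diagup\mathcal{I}$, every element of the coset $1+\bigl(\mathcal{Q}(\mathcal{R})\diagup\mathcal{I}\bigr)$ is a ring unit, which forces all non‑units into the proper ideal $\mathcal{Q}(\mathcal{R})\diagup\mathcal{I}$ and hence makes $A\diagup\mathcal{I}$ local with that maximal ideal.
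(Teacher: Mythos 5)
Your proposal is correct, but it is organized along a genuinely different route than the paper's own proof. The paper argues both implications directly, by contradiction, using only the quotient map $\pi:\mathcal{U}(\mathcal{R})\rightarrow\mathcal{U}(\mathcal{R})\diagup\mathcal{I}$: for the forward direction, a proper ideal $\mathcal{J}\supseteq\mathcal{I}$ with $\mathcal{J}\cap\mathcal{R}\neq\varnothing$ has image $\pi(\mathcal{J})$ containing an invertible element of the quotient $3$-field, whence $\pi(\mathcal{J})$ is the whole ring and $\mathcal{J}=\mathcal{U}(\mathcal{R})$; for the converse, a non-invertible $r\in\mathcal{R}\diagup\mathcal{I}$ is lifted to $r_{0}\in\mathcal{R}$, and the ideal generated by $r_{0}$ and $\mathcal{I}$ is proper, contains $\mathcal{I}$ and meets $\mathcal{R}$. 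You instead package these two steps as a chain of equivalences, invoking as black boxes the identification $\mathcal{U}(\mathcal{R}\diagup\mathcal{I})\cong\mathcal{U}(\mathcal{R})\diagup\mathcal{I}$, the $3$-fields/local-rings theorem, and the lattice correspondence between ideals of $\mathcal{U}(\mathcal{R})\diagup\mathcal{I}$ and ideals of $\mathcal{U}(\mathcal{R})$ containing $\mathcal{I}$. The underlying mathematics coincides (the paper's two contradictions are precisely the two halves of your locality-plus-correspondence argument, and its appeal to Krull's theorem is hidden inside the local-ring theorem you cite), but your version makes explicit two facts the paper uses silently: that an ideal of $\mathcal{Q}(\mathcal{R})$ is automatically an ideal of all of $\mathcal{U}(\mathcal{R})$, via the decomposition $\mathcal{U}(\mathcal{R})=\mathcal{R}\sqcup\mathcal{Q}(\mathcal{R})$ with $\mathcal{R}=1+\mathcal{Q}(\mathcal{R})$ (valid in any unital $3$-ring, since additive querelements always exist), and that forming $\mathcal{U}$ commutes with the quotient by $\mathcal{I}$, with no collapsing of the two cosets because $\mathcal{I}\subseteq\mathcal{Q}(\mathcal{R})$. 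What your route buys is modularity and a clean conceptual picture (everything reduces to the local-ring dictionary); what the paper's hands-on argument buys is brevity and independence from the functorial identification, which it never needs in full strength. Your closing remark, that the converse can bypass functoriality because units of the coset $1+\bigl(\mathcal{Q}(\mathcal{R})\diagup\mathcal{I}\bigr)$ force locality, is in fact essentially the paper's own second paragraph.
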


\begin{proof}
Suppose that $\mathbb{F}=\mathcal{R}\diagup\mathcal{I}$ is a unital
$3$-field but $\mathcal{J}\cap\mathcal{R}\neq\varnothing$ for some proper
ideal $\mathcal{J}$ containing $\mathcal{I}$. Let $\pi: \mathcal{U}\left(
\mathcal{R}\right)  \rightarrow\mathcal{U}\left(  \mathcal{R}\right)
\diagup\mathcal{I}$ be the quotient map. Note that there is
a 1-1 correspondence between the ideals of
$\mathcal{U}\left(\mathcal{R}\right)\diagup\mathcal{I}$ and
and the ideals of $\mathcal{U}\left(\mathcal{R}\right)$ containing
$\mathcal{I}$, given by
\begin{equation*}
    \mathcal{J}\longrightarrow \pi(\mathcal{J}), \qquad\text{resp.}\qquad
    \mathcal{J}'\longrightarrow \pi^{-1}(\mathcal{J}')
\end{equation*}
(where $\pi\pi^{-1}(\mathcal{J}')=\mathcal{J}'$ and $\pi^{-1}\pi(\mathcal{J})=\mathcal{J}$).
It then follows that  $\pi(\mathcal{J})$ is an ideal in
$\mathcal{U}\left(\mathcal{R}\right)\diagup\mathcal{I}=\mathcal{U}\left(\mathbb{F}\right)$.
Since $\emptyset\neq\pi(\mathcal{J}\cap\mathcal{R})\subseteq\pi(\mathcal{J})\cap\mathbb{F}$,
this ideal contains an invertible element, hence $\pi\left(\mathcal{J}\right)=
\mathcal{U}(\mathbb{F})$, and so, again by the above correspondence, $\mathcal{J}=\mathcal{U}(\mathcal{R})$.

If, on the other hand, for any proper ideal $\mathcal{J}\supseteq
\mathcal{I}$ we have $\mathcal{J}\cap\mathcal{R}=\varnothing$, we choose
$r\in\mathcal{R}\diagup\mathcal{I}$ as well as $r_{0}\in\mathcal{R}$ with
$\pi\left(  r_{0}\right)  =r$. If $r$ were not invertible, then the ideal
$\mathcal{J}_{0} $ generated by $r_{0}$ and $\mathcal{I}$ would be proper,
contain $\mathcal{I}$ and intersect $\mathcal{R}$.
\end{proof}

\begin{example}
\label{ex-32}Consider the $\left(  3,2\right)  $-ring $\mathbb{Z}
^{\mathrm{odd}}=\left\{  2k+1\mid k\in\mathbb{Z}\right\}  $. Note that each
proper ideal in the non-unital ring $\mathbb{Z}^{\mathrm{even}}$ is principal,
i.e.\ they are of the form $\left(  2k_{0}\right)  =\left\{  2k_{0}k\mid
k\in\mathbb{Z}\right\}  $, $k_{0}\in\mathbb{Z}$. We claim that $\left(
2k_{0}\right)  $ satisfies the hypothesis of Theorem (\ref{th-maxi}), iff $k_{0}=2^{n}$, $n\in\mathbb{N}$.
In fact, suppose that $p\mid k_{0}$ and $p\neq2$. Then $\left(  2p\right)
\supseteq\mathcal{I}$ and $p\in\mathbb{Z}^{\mathrm{odd}}$, and so
$\mathcal{I}$ cannot satisfy the maximiality condition from Theorem (\ref{th-maxi}).
\end{example}

\begin{example}
Let $\mathbb{F}$ be a proper unital $3$-field, then each ideal in $\mathcal{U}\left(
\mathbb{F}\right) $ must be contained in $\mathcal{Q}(\mathbb{F})$ and has to
fullfill the hypothesis of Theorem \ref{th-maxi} (what could be called
\textquotedblleft evenly maximal\textquotedblright ),
and so for each ideal $\mathcal{J}$ of $\mathcal{U}\left(  \mathbb{F}\right)  $,
$\mathbb{F}\diagup\mathcal{J}$ again is a field. This is quite different from
the binary case, where fields do not possess non-trivial quotients.
\end{example}

\begin{example}\label{Qodd-ideals}
The proper ideals for $\mathbb{Q}^{\mathrm{odd}}=\left\{  r\in\mathbb{Q}
\mid\exists p,q\in\mathbb{Z}^{\mathrm{odd}},r=\dfrac{p}{q}\right\}  $ are of
the form
\begin{equation}
\mathcal{J}_{n}=\left\{  r\in\mathcal{U}\left(  \mathbb{Q}^{\mathrm{odd}
}\right)  \mid\exists q\in\mathbb{Z}^{\mathrm{odd}},\exists u\in
\mathbb{Z},r=2^{n}\dfrac{u}{q}\right\}=\langle 2^{n}\rangle,\qquad n\in\mathbb{N}.
\end{equation}
Obviously, all the $\mathcal{J}_{n}$ are ideals for $\mathbb{Q}
^{\mathrm{odd}}$. Conversely, let $\mathcal{J}$ be an ideal, and
\begin{equation}
n_{0}=\min\left\{  n\in\mathbb{N}\mid\exists p,q\in\mathbb{Z}^{\mathrm{odd}
},2^{n}\dfrac{p}{q}\in\mathcal{J}\right\}  .
\end{equation}
Because any $r\in\mathcal{J}$ is of the form $2^{n}\dfrac{u}{q}$,
$u\in\mathbb{Z}$, $q\in\mathbb{Z}^{\mathrm{odd}}$, $n\geq n_{0}$ we must have
$\mathcal{J}_{n_{0}}\supseteq\mathcal{J}$. Fix an element $2^{n_{0}}
\dfrac{p_{0}}{q_{0}}\in\mathcal{J}$, $p_{0},q_{0}\in\mathbb{Z}^{\mathrm{odd}}
$. Then $2^{n_{0}}\dfrac{u}{q}\in\mathcal{J}$ for all $u\in\mathbb{Z}$,
$q\in\mathbb{Z}^{\mathrm{odd}}$ and hence $\mathcal{J}\supseteq\mathcal{J}
_{n_{0}}$.
\end{example}

We apply this observation to prime fields. Let us consider a unital $3$-field
$\mathbb{F}$ with unit $1$ and define $\mathbb{F}^{\mathrm{prim}}$ to be the
$3$-subfield generated by $1$, i.e. $\mathbb{F}^\mathrm{prim}=\left\langle
1\right\rangle $.

\begin{definition}
The \emph{characteristic} of a unital 3-field is
$\chi\left(\mathbb{F}\right)=\left\vert \mathbb{F}^{\mathrm{prim}}\right\vert$.
\end{definition}

\begin{theorem}
\label{theor-fp}If $\mathbb{F}^{\mathrm{prim}}$ is finite, then there is $n\in\mathbb{N}_0$
so that $\mathbb{F}^\mathrm{prim}\cong
\left(\mathbb{Z}\diagup2^{n}\mathbb{Z}\right)^{\mathrm{odd}}$.
Otherwise, $\mathbb{F}^\mathrm{prim}\cong\mathbb{Q}^{\mathrm{odd}}$.
\end{theorem}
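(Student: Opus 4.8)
The plan is to realize $\mathbb{F}^{\mathrm{prim}}$ as a quotient of $\mathbb{Q}^{\mathrm{odd}}$ and then read off the dichotomy from the already-completed classification of ideals in Example~\ref{Qodd-ideals}. First I would use that, by the theorem on $3$-fields and local rings, $\mathcal{U}(\mathbb{F})$ is a local ring whose maximal ideal $\mathcal{Q}(\mathbb{F})$ has residual field $\mathbb{Z}\diagup 2\mathbb{Z}$, so that $\mathbb{F}=\mathcal{U}(\mathbb{F})^{\times}$; in particular every odd integer multiple $q\cdot 1$ of the unit reduces to $1$ modulo $\mathcal{Q}(\mathbb{F})$ and is therefore a unit of $\mathcal{U}(\mathbb{F})$. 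Since $\mathcal{U}(\mathbb{Q}^{\mathrm{odd}})$ is the localization $\mathbb{Z}_{(2)}=\{p/q\mid q\ \mathrm{odd}\}$ and $\mathbb{Q}^{\mathrm{odd}}=\mathbb{Z}_{(2)}^{\times}$, the universal property of localization furnishes a unique ring homomorphism $\mathcal{U}(\mathbb{Q}^{\mathrm{odd}})\to\mathcal{U}(\mathbb{F})$ over $\mathbb{Z}$; restricting it to unit groups yields a $3$-field morphism $\phi:\mathbb{Q}^{\mathrm{odd}}\to\mathbb{F}$ with $\phi(1)=1$ and $\phi(p/q)=(p\cdot 1)(q\cdot 1)^{-1}$.

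The key observation is that $\operatorname{Im}\phi$ is exactly $\mathbb{F}^{\mathrm{prim}}=\langle 1\rangle$: being the image of a $3$-field it is a sub-$3$-field containing $1$, hence it contains $\langle 1\rangle$, while conversely each $(p\cdot 1)(q\cdot 1)^{-1}$ is built from $1$ by ternary addition and inversion and therefore lies in $\langle 1\rangle$. Applying Proposition~\ref{prop-q} I obtain
\begin{equation}
\mathbb{F}^{\mathrm{prim}}\cong\mathbb{Q}^{\mathrm{odd}}\diagup\ker\mathcal{U}(\phi).
\end{equation}
Since $\mathcal{U}(\phi)$ is unital, $1\notin\ker\mathcal{U}(\phi)$, so this kernel is a proper ideal for $\mathbb{Q}^{\mathrm{odd}}$; by Example~\ref{Qodd-ideals} it is therefore either the zero ideal or equals $\mathcal{J}_n=\langle 2^{n}\rangle$ for some $n\in\mathbb{N}$.

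In the first case $\phi$ is injective and $\mathbb{F}^{\mathrm{prim}}\cong\mathbb{Q}^{\mathrm{odd}}$, which is infinite. In the second case, reduction modulo $2^{n}$ gives a surjective $3$-field morphism $\mathbb{Q}^{\mathrm{odd}}\to(\mathbb{Z}\diagup 2^{n}\mathbb{Z})^{\mathrm{odd}}$, $p/q\mapsto\bar p\,\bar q^{\,-1}$, whose associated ring map $\mathbb{Z}_{(2)}\to\mathbb{Z}\diagup 2^{n}\mathbb{Z}$ has kernel $\langle 2^{n}\rangle=\mathcal{J}_n$, so Proposition~\ref{prop-q} identifies $\mathbb{Q}^{\mathrm{odd}}\diagup\mathcal{J}_n\cong(\mathbb{Z}\diagup 2^{n}\mathbb{Z})^{\mathrm{odd}}$, a finite $3$-field. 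Thus $\mathbb{F}^{\mathrm{prim}}$ is finite precisely when $\ker\mathcal{U}(\phi)=\mathcal{J}_n$, which produces the two alternatives in the statement. Alternatively one may bypass the second morphism by computing $\mathcal{U}(\mathbb{F}^{\mathrm{prim}})\cong\mathbb{Z}_{(2)}\diagup\langle 2^{n}\rangle\cong\mathbb{Z}\diagup 2^{n}\mathbb{Z}$ and then invoking part~(2) of the local-ring theorem to recover $\mathbb{F}^{\mathrm{prim}}$ as the set of odd (i.e.\ invertible) elements.

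The only real work lies in two routine verifications: that $\phi$ is a well-defined $3$-field morphism with image $\langle 1\rangle$ (handled by the localization argument, which crucially uses that the residual field is $\mathbb{Z}\diagup 2\mathbb{Z}$, so that odd denominators are invertible in $\mathcal{U}(\mathbb{F})$), and the identification $\mathbb{Q}^{\mathrm{odd}}\diagup\mathcal{J}_n\cong(\mathbb{Z}\diagup 2^{n}\mathbb{Z})^{\mathrm{odd}}$. Rather than a genuine obstacle, the main conceptual point is the recognition that the whole dichotomy is already encoded in Example~\ref{Qodd-ideals}: once $\mathbb{F}^{\mathrm{prim}}$ is presented as a quotient of $\mathbb{Q}^{\mathrm{odd}}$, the fact that the only proper ideals are the $\langle 2^{n}\rangle$ forces the characteristic to be a power of two and leaves no room for any odd prime, which is exactly what separates unital $3$-fields from the binary prime fields $\mathbb{F}_p$.
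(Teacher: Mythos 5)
Your proposal is correct and takes essentially the same route as the paper: the paper likewise defines the morphism $\psi:\mathbb{Q}^{\mathrm{odd}}\to\mathbb{F}^{\mathrm{prim}}$, $p\diagup q\mapsto pq^{-1}$, notes it is surjective because its image is a $3$-subfield containing $1$, identifies the kernel with one of the ideals $\mathcal{J}_n$ of Example~\ref{Qodd-ideals}, and obtains $\mathbb{Q}^{\mathrm{odd}}\diagup\mathcal{J}_{n}\cong\left(\mathbb{Z}\diagup2^{n}\mathbb{Z}\right)^{\mathrm{odd}}$ by reduction modulo $2^{n}$. Your localization argument (odd integers map to units of the local ring $\mathcal{U}(\mathbb{F})$ because the residual field is $\mathbb{Z}\diagup2\mathbb{Z}$) simply spells out the well-definedness of $\psi$, which the paper asserts without detail.
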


\begin{proof}
Define a morphism $\psi:\mathbb{Q}^{\mathrm{odd}}\rightarrow\mathbb{F}
^{\mathrm{prim}}$ by $\psi\left(  p\diagup q\right)  =pq^{-1}$ which is
well-defined and surjective (since $\operatorname*{im}\psi$ is a
$3$-subfield containing $1$), and so we must prove that $\mathbb{Q}
^{\mathrm{odd}}\diagup\mathcal{J}_{n}=\mathbb{Z}_{2^{n}}^{\mathrm{odd}}$, for
$n\in\mathbb{N}_{0}$. Since the case $\ker\psi=\left\{  0\right\}  $ is
trivial, we suppose $n> 1$. Then division with reminder by $2^{n}$ yields a
morphism $\mathbb{Z}^{\mathrm{odd}}\to\mathbb{Z}_{2^{n}}^{\mathrm{odd}}$, which extends
to the quotient 3-field $\mathbb{Q}^{\mathrm{odd}}$. It then follows that
the kernel of this extension is the ideal $\mathcal{J}_{n}$.
\end{proof}

We conclude this section with a characterisation of fields with characteristic 1.
\begin{proposition}\label{char-1-char}
A unital 3-fields $\mathbb{F}$ is of characteristic 1, iff its additive 3-group
contains a neutral element.
\end{proposition}
\begin{proof}
In a unital 3-field of characteristic 1, the unit always is a neutral element for
the ternary addition (as is any other element). Conversely, if the element 1
is an additive neutral element then, for arbitrary elements $e,x\in\mathbb{F}$
we have
\begin{equation*}
x+e+e=e(e^{-1}x+1+1)=x,
\end{equation*}
and hence, the characteristic of $\mathbb{F}$ equals one.
\end{proof}

\section{3-vector spaces and unital 3-algebras}

We start by defining a ternary analogue of the concept of a vector space.

\begin{definition}
A 3-\emph{vector space} consists of a commutative
3-group of vectors, $V$, a unital 3-field $\mathbb{F}$ as well as an action of
$\mathbb{F}$ on $V$ (so hat $f(gv)=(fg)v$ for all $f,g\in\mathbb{F}$ and $v\in V$).
Furthermore, $1v=v$ for all $v\in V$, and the (ternary analog
of the) standard distributivity laws hold. Linear mappings between
3-vector spaces are defined in the obvious way.
\end{definition}

Given a 3-vector space $V$ over $\mathbb{F}$ we have a canonical action of
$\mathcal{U}\left(  \mathbb{F}\right)  $ on $\mathcal{U}\left(  V\right) $, given
in each of the as yet undefined cases
$\mathbb{F}\times\mathcal{Q}\left(V\right)\to\mathcal{Q}\left({V}\right)$,
$\mathcal{Q}\left( \mathbb{F}\right)\times V\to\mathcal{Q}\left({V}\right)$, and
$\mathcal{Q}\left( \mathbb{F}\right)\times\mathcal{Q}\left(V\right)\to\mathcal{Q}\left({V}\right)$
similar to the definitions that led to Theorem \ref{U(R)}. With these actions in
place, $\mathcal{U}\left(V\right)$ is a binary module over the binary ring $\mathcal{U}(\mathbb{F})$.

\begin{definition}
A subset\ $E\subseteq V$ of a $3$-vector space over a unital $3$-field
$\mathbb{F}$ is called a \textit{generating system}, iff any element of $V$
can be represented as $\sum_{i=1}^{n}\lambda_{i}a_{i}$ with $\lambda_{i}\in
U\left(  \mathbb{F}\right)  $, $a_{i}\in E$, and $\sum_{i=1}^{n}\lambda_{i}
\in\mathbb{F}$. A subset $A$ is called a \textit{basis}, iff this representation is
unique. If $A$ is any subset of $V$ we denote by $\operatorname{lin}A$ the 3-vector
subspace of $V$ generated by $A$.
\end{definition}

\begin{remark}
It is important to observe that any linear combination $\sum_{i=1}^{n}
\lambda_{i}v_{i}$ with $\lambda_{i}\in U\left(  \mathbb{F}\right)  $,
$v_{i}\in V$, $\sum_{i=1}^{n}\lambda_{i}\in\mathbb{F}$ yields an element of
$V$.
\end{remark}

\begin{example}
A $3$-vector space $V$ over the unital $3$-field $\mathbb{F}$ is given by
\begin{equation}
\left(  \mathbb{F}^{n}\right)  ^\mathrm{free}=\left\{  \left(  a_{1},\ldots,
a_{n}\right)  \in\mathcal{U}\left(  \mathbb{F}\right)  \mid\sum_{i=1}^{n}
a_{i}\in\mathbb{F}\right\}.
\end{equation}
It has a basis consisting of elements $e_{i}=\left(  \delta_{ij}\right)
_{j}\in\left(  \mathbb{F}^{n}\right)  ^\mathrm{free}$.
\end{example}

\begin{example}
Note that the Cartesian product $\mathbb{F}^{n}$ (which is a 3-field) is
a 3-vector space as well, which however does not possess a basis if $n$ is
different from 1. If $n=1$, $\mathbb{F}^\mathrm{free}=\mathbb{F}$, and any element of
$\mathbb{F}$ is a basis.
\end{example}

\begin{proposition}
Every $3$-vector space over a unital $3$-field has a free resolution, i.e.\ there is
a 3-vector space $V^\mathrm{free}$ with basis which has $V$ as a quotient.
\end{proposition}

\begin{proof}
We pick a generating set $A$, and let
$V^\mathrm{free}=\left\{\sum_{\alpha\in A_0\text{\ finite} }f_{\alpha}\alpha\mid
\sum f_\alpha\in\mathbb{F}\right\}  $. Define $\phi_{V}:V^\mathrm{free}\rightarrow V$ by
$\sum_{\alpha\in A_0\text{finite} }f_{\alpha}\alpha \longmapsto
\sum_{\alpha\in A_0\text{finite} }f_{\alpha}\alpha$ (so that the formal sum
$\sum_{\alpha\in A_0\text{finite} }f_{\alpha}\alpha$ in $V^\mathrm{free}$
is mapped to a sum in $V$). Then
$\ker\phi_{V}$ is a $U\left(  \mathbb{F}\right)  $-submodule contained in
$\mathcal{Q}\left(  V^\mathrm{free}\right)  $. Hence $V$ is isomorphic to
$V^\mathrm{free}\diagup\ker\phi_{V}$.
\end{proof}

\begin{corollary}
\label{corr-uk}The number of elements of a 3-vector space over the finite
3-field $\mathbb{F}$, generated by n elements is
\begin{equation}
\dfrac{\left\vert \mathcal{U}\left(  \mathbb{F}\right)  \right\vert ^{n}
}{2\left\vert \ker\phi_{V}\right\vert }=2^{n-1}\dfrac{\left\vert
\mathbb{F}\right\vert ^{n}}{\left\vert \ker\phi_{V}\right\vert }.
\end{equation}

\end{corollary}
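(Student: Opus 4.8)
The corollary asks for the cardinality of a 3-vector space $V$ over a finite unital 3-field $\mathbb{F}$, where $V$ is generated by $n$ elements. The strategy is to count via the free resolution constructed in the preceding proposition: we have $V\cong V^{\mathrm{free}}\diagup\ker\phi_V$, so that $|V|=|V^{\mathrm{free}}|/|\ker\phi_V|$, provided the quotient partitions $V^{\mathrm{free}}$ into equivalence classes all of the same size. Hence the two things I need to pin down are (i) the cardinality of $V^{\mathrm{free}}=\{(f_1,\dots,f_n)\mid\sum f_i\in\mathbb{F}\}$, and (ii) the fact that the $\ker\phi_V$-cosets all have exactly $|\ker\phi_V|$ elements.

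First I would compute $|V^{\mathrm{free}}|$. By definition $V^{\mathrm{free}}$ consists of those $n$-tuples in $\mathcal{U}(\mathbb{F})^n$ whose coordinate sum lands in $\mathbb{F}$ rather than in $\mathcal{Q}(\mathbb{F})$. From the local-ring picture of the previous section, $\mathcal{U}(\mathbb{F})$ splits as the disjoint union $\mathbb{F}\,\sqcup\,\mathcal{Q}(\mathbb{F})$ with $|\mathcal{U}(\mathbb{F})|=2|\mathbb{F}|$, the quotient $\mathcal{U}(\mathbb{F})\diagup\mathcal{Q}(\mathbb{F})$ being $\mathbb{Z}\diagup2\mathbb{Z}$; so each element of $\mathcal{U}(\mathbb{F})$ carries a well-defined parity (the image under $\pi_{\mathcal{Q}}$), with $\mathbb{F}$ the odd class and $\mathcal{Q}(\mathbb{F})$ the even class. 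The condition $\sum_{i=1}^n f_i\in\mathbb{F}$ is thus exactly the parity condition $\sum_i\pi_{\mathcal{Q}}(f_i)=1$ in $\mathbb{Z}\diagup2\mathbb{Z}$, i.e.\ an odd number of the $f_i$ are odd. Since each coordinate ranges freely over $\mathcal{U}(\mathbb{F})$ subject only to its parity, and exactly half of all $2^n$ parity patterns satisfy the constraint, I get $|V^{\mathrm{free}}|=\tfrac12|\mathcal{U}(\mathbb{F})|^n=\tfrac12(2|\mathbb{F}|)^n=2^{n-1}|\mathbb{F}|^n$.

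Next I would justify that passing to the quotient divides by $|\ker\phi_V|$ cleanly. Here $\ker\phi_V$ is, by the proposition, a $\mathcal{U}(\mathbb{F})$-submodule of $\mathcal{Q}(V^{\mathrm{free}})$, and the equivalence relation on $V^{\mathrm{free}}$ it induces is $v\sim v'$ iff $v'=v+\mathbf{q}$ for some $\mathbf{q}\in\ker\phi_V$ (the additive-pair action described in the Ideals section). Because this is the orbit relation of a free translation action of the additive group underlying $\ker\phi_V$ on $V^{\mathrm{free}}$, every equivalence class has exactly $|\ker\phi_V|$ elements, and so $|V|=|V^{\mathrm{free}}|/|\ker\phi_V|$. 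Combining with the count above yields
\begin{equation}
|V|=\frac{|V^{\mathrm{free}}|}{|\ker\phi_V|}=\frac{2^{n-1}|\mathbb{F}|^n}{|\ker\phi_V|}=\frac{|\mathcal{U}(\mathbb{F})|^n}{2|\ker\phi_V|},
\end{equation}
which is precisely the claimed formula.

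**Main obstacle.** The delicate point is not the arithmetic but verifying that the cosets genuinely all have the same size, equivalently that the translation action of $\ker\phi_V$ on $V^{\mathrm{free}}$ is free and that each orbit stays inside $V^{\mathrm{free}}$. Since $\ker\phi_V\subseteq\mathcal{Q}(V^{\mathrm{free}})$ consists of even (parity-$0$) pairs, adding such a $\mathbf{q}$ preserves the parity constraint $\sum f_i\in\mathbb{F}$, so orbits do remain in $V^{\mathrm{free}}$; and freeness is the cancellativity built into the additive 3-group structure. I would spell out exactly this parity-preservation, as it is what guarantees the clean factor of $2$ and makes the counting $|\mathcal{U}(\mathbb{F})|^n = 2^n|\mathbb{F}|^n$ with the sum-constraint halving reduce to the stated expression.
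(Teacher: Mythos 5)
Your proof is correct and takes essentially the same route as the paper's (very terse) argument: the paper merely notes that $\left\vert \mathcal{U}\left(\mathbb{F}\right)\right\vert = 2\left\vert\mathbb{F}\right\vert$ (via the standard form of pairs) and leaves implicit both the identification $V\cong V^{\mathrm{free}}\diagup\ker\phi_{V}$ from the preceding proposition and the resulting count. You simply make those implicit steps explicit --- the parity count giving $\left\vert V^{\mathrm{free}}\right\vert = 2^{n-1}\left\vert\mathbb{F}\right\vert^{n}$ and the equal-size-coset argument for dividing by $\left\vert\ker\phi_{V}\right\vert$ --- so your write-up is a fleshed-out version of the paper's proof.
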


\begin{proof}
We first note that the number of elements in the free 3-vector space of dimension
$n$ is equal to half the number of elements in $\mathcal{U}\left(\mathbb{F}\right)^n$
and so this statement follows from the above theorem as well as the fact that
$\left\vert \mathcal{U}\left( \mathbb{F}\right)
\right\vert =2\left\vert \mathbb{F}\right\vert $ (which is rapidly seen by using the standard
form of pairs).
\end{proof}

\begin{example}
Consider $\mathbb{F}^{2}=\left\{  1,3\right\}  ^{2}=\left\{  \binom{a}{b}\mid
a,b,\in\left(\mathbb{Z\diagup}4\mathbb{Z}\right)^{\mathrm{odd}}\right\}  $
over $\mathbb{F=}\left(  \mathbb{Z\diagup}4\mathbb{Z}\right)  ^{\mathrm{odd}
}=\left\{  1,3\right\}  $. We have $V=\left\{  \left(
\begin{array}
[c]{c}
1\\
1
\end{array}
\right)  ,\left(
\begin{array}
[c]{c}
3\\
1
\end{array}
\right)  ,\left(
\begin{array}
[c]{c}
1\\
3
\end{array}
\right)  ,\left(
\begin{array}
[c]{c}
3\\
3
\end{array}
\right)  \right\}  $. A generating set is $\left\{  \left(
\begin{array}
[c]{c}
1\\
1
\end{array}
\right)  ,\left(
\begin{array}
[c]{c}
3\\
1
\end{array}
\right)  \right\}  =\left\{  e_{1},e_{2}\right\}  $. Thus, a free resolution
is given by $V^\mathrm{free}=\left\{  ae_{1}+be_{2}\mid a,b\in\mathbb{Z\diagup
}4\mathbb{Z},\ a+b\in\left(  \mathbb{Z\diagup}4\mathbb{Z}\right)
^{\mathrm{odd}}\right\}  $.
\end{example}

\begin{definition}
Let $A$ be a 3-vector space over the unital 3-field $\mathbb{F}$. We call $A$
a \emph{unital (commutative)} 3-\emph{algebra}, iff there exists a binary multiplication
$\left(  \bullet\right)  $ on $A$ so that $\left(  A,+,\bullet\right)  $ is a
(commutative) unital 3-ring.
\end{definition}

In the following, 3-algebras will be mostly commutative.

\begin{example}
Let $G$ be a binary group and $\mathbb{F}$ a unital $3$-field. The group algebra of
$G$ over $\mathbb{F}$ is defined by
\begin{equation}
\mathbb{F}G=\left\{  \phi:G\rightarrow\mathcal{U}\left(  \mathbb{F}\right)
\mid\sum_{g\in G}\phi\left(  g\right)  \in\mathbb{F}\right\}
\end{equation}
together with the convolution product
$\left(\phi\ast\psi\right)\left(g\right)
=\sum_{g_{1}g_{2}=g}\phi\left(g_{1}\right)\psi\left(g_{2}\right)$
which is well-defined, because $\phi\ast\psi\in\mathbb{F}G$. These 3-algebras quite often seem
to be 3-fields. For example, in the case where $G$ equals the additive group
$\mathbb{Z}\diagup n\mathbb{Z}$, we have
$\mathbb{F}G=\mathbb{F}\left(n\right)$, as defined below.
\end{example}

\begin{definition}
Fix a unital 3-field $\mathbb{F}$ and let
\begin{multline}
\mathbb{F}\left[x_{1},\ldots,x_{n}\right]=\\
=\left\{\sum_{\left\vert\alpha\right\vert\leq N}f_{\alpha}x^{\alpha}\mid
\alpha=\left(  \alpha_{1},\ldots,\alpha_{n}\right),\quad N\in\mathbb{N}_{0},
\quad f_{\alpha}\in\mathcal{U}\left(\mathbb{F}\right),
\quad\sum f_{\alpha}\in\mathbb{F}\right\}  .
\end{multline}
We call this space the \emph{polynomial algebra in $n$ variables over the $3$-field
$\mathbb{F}$}. This space is a unital $3$-algebra under the usual product of polynomials.
\end{definition}

We calculate that
\begin{multline}
\mathcal{Q}\left(\mathbb{F}\left[x_{1},\ldots,x_{n}\right]\right)=:
\mathbb{F}\left[x_{1},\ldots,x_{n}\right]^{\mathrm{even}}\\
=\left\{\sum_{\left\vert \alpha\right\vert \leq N}f_{\alpha}x^{\alpha
}\mid\alpha=\left(  \alpha_{1},\ldots,\alpha_{n}\right)  ,\ \ N\in
\mathbb{N}_{0},\ \ f_{\alpha}\in\mathcal{U}\left(  \mathbb{F}\right)
,\ \ \sum f_{\alpha}\in\mathcal{Q}\left(  \mathbb{F}\right)  \right\}  .
\end{multline}
and $\mathcal{U}\left(  \mathbb{F}\left[x_{1},\ldots,x_{n}\right]\right)
=\mathcal{U}\left(\mathbb{F}\right)\left[x_{1},\ldots,x_{n}\right]$.

\begin{theorem}[Universality of polynomial algebras]
The polynomial algebra $\mathbb{F}\left[x_{1},\ldots,x_{n}\right]$
is universal in the class of unital
$3$-algebras over $\mathbb{F}$, generated by $n$ elements.

The polynomial algebra $\mathbb{Q}^{\mathrm{odd}}\left[x_{1},\ldots,x_{n}\right]$
is universal in the class of all unital $3$-algebras over any of the prime fields,
generated by $n$ elements.
\end{theorem}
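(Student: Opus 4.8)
The plan is to reduce the universal property to the well-known freeness of ordinary polynomial rings by means of the functor $\mathcal{U}$, exploiting the identity $\mathcal{U}\left(\mathbb{F}\left[x_{1},\ldots,x_{n}\right]\right)=\mathcal{U}\left(\mathbb{F}\right)\left[x_{1},\ldots,x_{n}\right]$ recorded just above. Concretely, I read universality as the following free property, to be established: for every unital commutative $3$-algebra $A$ over $\mathbb{F}$ and every tuple $\left(a_{1},\ldots,a_{n}\right)\in A^{n}$ there is a unique morphism $\Phi:\mathbb{F}\left[x_{1},\ldots,x_{n}\right]\to A$ of unital $3$-algebras over $\mathbb{F}$ with $\Phi\left(x_{i}\right)=a_{i}$.

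First I would set up the translation into binary algebra. Since $\mathcal{U}$ is a faithful functor carrying an $\mathbb{F}$-linear $3$-algebra morphism into a $\mathcal{U}\left(\mathbb{F}\right)$-linear binary ring morphism, a morphism $\Phi$ as above is the same datum as a morphism $\widehat{\Phi}:\mathcal{U}\left(\mathbb{F}\right)\left[x_{1},\ldots,x_{n}\right]\to\mathcal{U}\left(A\right)$ of commutative $\mathcal{U}\left(\mathbb{F}\right)$-algebras that maps the odd part $\mathbb{F}\left[x_{1},\ldots,x_{n}\right]$ into $A$ (mapping odd to odd forces even to even, since for odd $p$ one has $\widehat{\Phi}(p)-1\in\mathcal{Q}(A)$). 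Now $\mathcal{U}\left(\mathbb{F}\right)\left[x_{1},\ldots,x_{n}\right]$ is the free commutative $\mathcal{U}\left(\mathbb{F}\right)$-algebra on the indeterminates, so there is exactly one $\mathcal{U}\left(\mathbb{F}\right)$-algebra morphism $\widehat{\Phi}$ with $\widehat{\Phi}\left(x_{i}\right)=a_{i}$, namely the evaluation $\sum f_{\alpha}x^{\alpha}\mapsto\sum f_{\alpha}a^{\alpha}$. Uniqueness of $\Phi$ then follows from uniqueness of $\widehat{\Phi}$ together with faithfulness of $\mathcal{U}$.

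The crux, and the step I expect to require the most care, is to verify that this evaluation actually descends to a morphism of $3$-algebras, i.e. that $\widehat{\Phi}\left(\mathbb{F}\left[x_{1},\ldots,x_{n}\right]\right)\subseteq A$. For a polynomial with $\sum f_{\alpha}\in\mathbb{F}$ this is precisely the assertion that the combination $\sum f_{\alpha}a^{\alpha}$, whose coefficients sum to an element of $\mathbb{F}$, lies in $A$, which is delivered directly by the remark that any such linear combination of elements of a $3$-vector space again lies in that space. More transparently, it rests on the fact that $\mathcal{U}\left(A\right)$ has residual field $\mathbb{Z}\diagup 2\mathbb{Z}$, which I would record as a short lemma valid for every unital $3$-ring $\mathcal{R}$, the genuinely new input beyond the $3$-field case already treated: in $\mathcal{U}\left(\mathcal{R}\right)$ one has $r+s=\mathbf{q}_{r,s}\in\mathcal{Q}\left(\mathcal{R}\right)$ for all $r,s\in\mathcal{R}$, so modulo $\mathcal{Q}\left(\mathcal{R}\right)$ every element of $\mathcal{R}$ falls into a single class $e$ with $2e=0$ and $e\neq0$; as $e=\overline{1}$ is the unit, $\mathcal{U}\left(\mathcal{R}\right)\diagup\mathcal{Q}\left(\mathcal{R}\right)\cong\mathbb{Z}\diagup 2\mathbb{Z}$. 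Granting this, the projection of $\sum f_{\alpha}a^{\alpha}$ equals $\sum\overline{f_{\alpha}}=\overline{\sum f_{\alpha}}=1$, so the image lands in $A$; the remaining check that $\widehat{\Phi}$ is multiplicative and ternary-additive on the odd part is then routine.

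For the second assertion I would reduce to the first taken with $\mathbb{F}=\mathbb{Q}^{\mathrm{odd}}$. By Theorem \ref{theor-fp} every prime field $\mathbb{P}$ is a quotient of $\mathbb{Q}^{\mathrm{odd}}$, so fixing a surjection $\mathbb{Q}^{\mathrm{odd}}\to\mathbb{P}$ and restricting scalars turns any unital $3$-algebra over $\mathbb{P}$ generated by $n$ elements into one over $\mathbb{Q}^{\mathrm{odd}}$ generated by the same $n$ elements. The universal property of $\mathbb{Q}^{\mathrm{odd}}\left[x_{1},\ldots,x_{n}\right]$ is then exactly the first part applied with $\mathbb{F}=\mathbb{Q}^{\mathrm{odd}}$.
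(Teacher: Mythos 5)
Your proof is correct, but it is organized around a different technical backbone than the paper's. The paper stays entirely inside the ternary category: it defines the evaluation morphism $\Psi\left(\sum_{\left\vert\alpha\right\vert\leq N}f_{\alpha}x^{\alpha}\right)=\sum_{\left\vert\alpha\right\vert\leq N}f_{\alpha}a^{\alpha}$ directly on $\mathbb{F}\left[x_{1},\ldots,x_{n}\right]$ (its well-definedness being exactly the remark of Section 4 that a linear combination with coefficients in $\mathcal{U}(\mathbb{F})$ summing to an element of $\mathbb{F}$ stays inside the 3-vector space), observes that generation by $a_{1},\ldots,a_{n}$ makes $\Psi$ surjective, and then quotes the quotient machinery of Sections 2--3 (Proposition \ref{prop-q}) to conclude $A\cong\mathbb{F}\left[x_{1},\ldots,x_{n}\right]\diagup\ker\Psi$; that is, it reads ``universal'' as ``every $n$-generated algebra is a quotient''. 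You instead read universality as the free-object property with a uniqueness clause, lift the problem through $\mathcal{U}$, obtain $\widehat{\Phi}$ from the classical freeness of the binary polynomial ring $\mathcal{U}(\mathbb{F})\left[x_{1},\ldots,x_{n}\right]$, and descend by your lemma that $\mathcal{U}(\mathcal{R})\diagup\mathcal{Q}(\mathcal{R})\cong\mathbb{Z}\diagup 2\mathbb{Z}$ for an \emph{arbitrary} unital 3-ring $\mathcal{R}$ --- a genuine extension of what the paper proves only for 3-fields in its local-ring theorem, and a correct one, since $\mathcal{U}(\mathcal{R})=\mathcal{Q}(\mathcal{R})\cup\mathcal{R}$ is a disjoint union and $r+s=\mathbf{q}_{r,s}\in\mathcal{Q}(\mathcal{R})$ for $r,s\in\mathcal{R}$. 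Your route buys the explicit uniqueness statement, an explicit restriction to commutative algebras (which the paper leaves implicit but which is necessary), and a cleanly isolated key lemma; the paper's route is shorter and directly produces the presentation $A\cong\mathbb{F}\left[x\right]\diagup\ker\Psi$ in the form in which it is reused in the section on finite fields. To recover that reading of universality from your argument you need only one extra sentence: $\Phi$ is surjective because its image is a 3-subalgebra of $A$ containing the generators, whence $A\cong\mathbb{F}\left[x_{1},\ldots,x_{n}\right]\diagup\ker\mathcal{U}(\Phi)$ by Proposition \ref{prop-q}. For the second claim the divergence is smaller: the paper composes the quotient maps $\mathbb{Q}^{\mathrm{odd}}\left[x_{1},\ldots,x_{n}\right]\to\mathbb{P}\left[x_{1},\ldots,x_{n}\right]\to A$ supplied by Theorem \ref{theor-fp} and Example \ref{Qodd-ideals}, whereas you restrict scalars along the surjection $\mathbb{Q}^{\mathrm{odd}}\to\mathbb{P}$ and apply the first part once; both are valid and rest on the same classification of prime fields.
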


\begin{proof}
{\bf (1)}
If $A$ is an algebra generated by $a_{1},\ldots,a_{n}$, define $\Psi:\mathbb{F}\left[
x_{1},\ldots,x_{n}\right]  ^{\mathrm{}}\rightarrow A$ by
\begin{equation}
\Psi\left(\sum_{\left\vert \alpha\right\vert \leq N}f_{\alpha}x_{1}^{\alpha_{1}},\ldots,
x_{n}^{\alpha_{n}}\right)  =\sum_{\left\vert \alpha\right\vert \leq
N}f_{\alpha}a_{1}^{\alpha_{1}},\ldots,a_{n}^{\alpha_{n}}
\end{equation}
and use section 2 to
see that $A\cong\mathbb{F}\left[  x_{1},\ldots,x_{n}\right]  ^{\mathrm{}}
/\ker\Psi$.

{\bf(2)}
The statement about
$\mathbb{Q}^{\mathrm{odd}}\left[x_{1},\ldots,x_{n}\right]$
follows by applying the first part of this theorem for the respective prime
field and then by combining the quotient mapping with the one from
Example \ref{Qodd-ideals}.
\end{proof}

\begin{example}\label{Definition F(n)}
Fix a unital 3-field $\mathbb{F}$ as well as a natural number $n$, and define
\begin{equation}
\mathbb{F}\left(  n\right)  =\left\{  P=f+\sum_{i=1}^{n-1}a_{i}\left(
x-1\right)  ^{i}\mid a_{i}\in\mathcal{U}\left(  \mathbb{F}\right)
,\ f\in\mathbb{F},\ \left(  x-1\right)  ^{n}=0\right\}  .
\end{equation}
Here, we use the language of 'generators and relations' so that
requiring $\ \left(  x-1\right) ^{n}=0$ means that we divide out the ideal
generated by this polynomial.
It follows that $\mathbb{F}\left(n\right)$ is a unital 3-algebra, generated by the single
polynomial $x-1$. It actually is a 3-field,
as it is isomorphic to $\mathbb{T}\left(n,\mathbb{F}\right)$, which we introduce in
\end{example}

\begin{example}
The \emph{Toeplitz field} of order $n$ over $\mathbb{F}$,
$\mathbb{T}\left(n,\mathbb{F}\right)$, consists, as set, of all matrices
\begin{equation}
t=\left(
\begin{array}
[c]{cccc}
f & \cdots & 0 & 0\\
b_{1} & \ddots & \ddots & 0\\
\vdots & \ddots & \ddots & \vdots\\
b_{n-1} & \cdots & b_{1} & f
\end{array}
\right),\ \ f\in
\mathbb{F},\ \ b_{i}\in\mathcal{U}\left(  \mathbb{F}\right)  ,\ \ i=1,\ldots,n-1.
\end{equation}
Note that the inverse of each $t$ is of the same form, and hence the Toeplitz
fields are commutative $3$-subfields of the triangular $3$-fields
from Example \ref{ex-tri}. The number of elements in this field is
\begin{equation}
\left\vert \mathbb{T}\left(  n,\mathbb{F}\right)  \right\vert =\left\vert
\mathbb{F}\right\vert \left\vert \mathcal{U}\left(  \mathbb{F}\right)
\right\vert ^{n-1}=2^{n-1}\left\vert \mathbb{F}\right\vert .
\end{equation}
\end{example}

We now can show that $\mathbb{F}\left(n\right)\cong\mathbb{T}\left(n,\mathbb{F}\right)$
(first, as ternary unital rings, and then, consequently, as fields):
The 3-vector space $\mathbb{F}\left(n\right)$ has basis
\begin{equation}
E=\left\{  e_{i}=\left(  x-1\right)  ^{i}\mid i=0,\ldots,n-1\right\}  ,
\end{equation}
and if we consider an element $P=f+\sum_{i=1}^{n-1}b_{i}\left(  x-1\right)
^{i}\in\mathbb{T}_{+}\left(  n,\mathbb{F}\right)  $, $f\in\mathbb{F}
,\ \ b_{i}\in\mathcal{U}\left(  \mathbb{F}\right)  $, $i=1,\ldots,n-1$ as a
linear map on $\mathbb{F}\left(  n\right)  $, then its matrix representation
w.r.t.\ $E$ is given by
\begin{equation}
t_{f}=\left(
\begin{array}
[c]{cccc}
f & \cdots & 0 & 0\\
b_{1} & \ddots & \ddots & 0\\
\vdots & \ddots & \ddots & \vdots\\
b_{n-1} & \cdots & b_{1} & f
\end{array}
\right)  .
\end{equation}
Since the product of $\mathbb{F}\left(  n\right)  $ turns out to be the matrix
product of these matrices, the claim has been proven.

\section{Finite fields}
In general, the theory of the finite unital 3-fields looks quite different from the
binary one, in which all such fields can be explicitly listed.
In the case of unital 3-fields such a classification seems to be out of
reach for the moment, which is due to the fact that the minimal number of generators
for this type of field no longer has to be
one, as in the binary case. In fact, finite unital 3-fields do not share much with
their binary counterparts.

\begin{theorem}
A finite unital 3-field $\mathbb{F}$ admits an embedding into a binary field
$\mathbb{K}$ (equipped with its derived ternary addition) if and only if $\mathbb{F}=\{1\}$.
\end{theorem}

\begin{proof}
If $n=\chi(\mathbb{F})>1$, then $\mathcal{Q}(\Prim\mathbb{F})=(\mathbb{Z}/2^n\mathbb{Z})^{\mathrm{even}}$ is not an integral domain,
and so $\Prim\mathbb{F}$ (and much less $\mathbb{F}$) can be embedded into a
binary field, in light of Theorem\ref{characterizing embedding}. In case $\chi(\mathbb{F})=1$, we
will see in Theorem~\ref{Singly generated fields} how a subfield of $\mathbb{F}$ looks which
is generated by a single element, and the statement then follows as
$\mathcal{Q}(\mathbb{F})$ in this case cannot be an integral domain, either.
\end{proof}

Our second result remarkably pairs with the binary case, though.

\begin{theorem}[Cardinality of finite fields]
For each finite unital $3$-field the number of elements is a power of $2$.
\end{theorem}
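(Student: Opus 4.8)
The plan is to transport the problem into binary algebra via the functor $\mathcal{U}$ and then invoke the standard structure theory of finite local rings. First I would apply the structure theorem relating unital $3$-fields to local rings: since $\mathbb{F}$ is a unital $3$-field, $\mathcal{R}:=\mathcal{U}\left(\mathbb{F}\right)$ is a commutative local binary ring whose unique maximal ideal is $\mathcal{J}:=\mathcal{Q}\left(\mathbb{F}\right)$ and whose residual field is $\mathcal{R}\diagup\mathcal{J}\cong\mathbb{Z}\diagup 2\mathbb{Z}$. Finiteness of $\mathbb{F}$ makes $\mathcal{R}$ finite, since $\left\vert\mathcal{U}\left(\mathbb{F}\right)\right\vert=2\left\vert\mathbb{F}\right\vert$ (as recorded following Corollary~\ref{corr-uk}). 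Because that same identity gives $\left\vert\mathbb{F}\right\vert=\left\vert\mathcal{R}\right\vert\diagup 2$, it suffices to prove that $\left\vert\mathcal{R}\right\vert$ is a power of $2$.

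Next I would exploit that a finite commutative local ring is Artinian, so that its maximal ideal $\mathcal{J}$ is nilpotent. The descending chain $\mathcal{R}\supseteq\mathcal{J}\supseteq\mathcal{J}^{2}\supseteq\cdots$ consists of finite sets and therefore stabilizes, and Nakayama's lemma (applicable because $\mathcal{J}$ is the Jacobson radical of the finite, hence finitely generated, ring $\mathcal{R}$) forces $\mathcal{J}^{m}=0$ for some $m$. This produces a finite filtration $\mathcal{R}=\mathcal{J}^{0}\supseteq\mathcal{J}^{1}\supseteq\cdots\supseteq\mathcal{J}^{m}=\left\{0\right\}$.

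The decisive observation is that each quotient $\mathcal{J}^{i}\diagup\mathcal{J}^{i+1}$ is annihilated by $\mathcal{J}$ and is therefore a module over $\mathcal{R}\diagup\mathcal{J}\cong\mathbb{Z}\diagup 2\mathbb{Z}$, i.e.\ a vector space over the two-element field. Hence $\left\vert\mathcal{J}^{i}\diagup\mathcal{J}^{i+1}\right\vert=2^{d_{i}}$ for some $d_{i}\in\mathbb{N}_{0}$, and multiplying the orders of the successive quotients yields $\left\vert\mathcal{R}\right\vert=\prod_{i=0}^{m-1}2^{d_{i}}=2^{\,d_{0}+\cdots+d_{m-1}}$. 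Dividing by $2$ then shows that $\left\vert\mathbb{F}\right\vert$ is a power of $2$.

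I expect the only genuinely delicate point to be the nilpotence of $\mathcal{J}$, which is where finiteness is really used; everything downstream is routine counting along the $\mathcal{J}$-adic filtration. Should one prefer to avoid Nakayama, an alternative is to note that in a finite commutative ring every prime ideal is maximal, so in the local ring $\mathcal{R}$ the nilradical coincides with $\mathcal{J}$; thus every element of $\mathcal{J}$ is nilpotent, and a finite nil ideal is nilpotent.
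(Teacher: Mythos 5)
Your proof is correct, but it takes a genuinely different route from the paper's. The paper argues on the ternary side: a finite unital $3$-field $\mathbb{F}$ is a $3$-vector space over its prime field $\Prim\mathbb{F}$, so Corollary \ref{corr-uk} gives $\left\vert\mathbb{F}\right\vert=2^{n-1}\chi\left(\mathbb{F}\right)^{n}\diagup\left\vert\ker\phi_{\mathbb{F}}\right\vert$, and Theorem \ref{theor-fp} identifies $\Prim\mathbb{F}$ with some $\left(\mathbb{Z}\diagup2^{m}\mathbb{Z}\right)^{\mathrm{odd}}$, so that $\chi\left(\mathbb{F}\right)$ is a power of $2$ and a divisibility remark on $\left\vert\ker\phi_{\mathbb{F}}\right\vert$ finishes the count. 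You work instead entirely on the binary side: the local-ring theorem of Section 2 shows $\mathcal{U}\left(\mathbb{F}\right)$ is a finite local ring with maximal ideal $\mathcal{Q}\left(\mathbb{F}\right)$ and residue field $\mathbb{Z}\diagup2\mathbb{Z}$, the identity $\left\vert\mathcal{U}\left(\mathbb{F}\right)\right\vert=2\left\vert\mathbb{F}\right\vert$ reduces the problem to $\mathcal{U}\left(\mathbb{F}\right)$, and then standard finite-ring theory (nilpotence of the maximal ideal, counting along the filtration $\mathcal{J}^{i}\supseteq\mathcal{J}^{i+1}$ whose graded pieces are vector spaces over $\mathbb{Z}\diagup2\mathbb{Z}$) does the rest. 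Each route buys something: yours avoids both the prime-field classification and the free-resolution counting formula---in particular the paper's rather terse step that $\left\vert\ker\phi_{\mathbb{F}}\right\vert$ ``must divide'' the relevant power of $2$---replacing them with textbook commutative algebra, and it isolates exactly where finiteness enters (nilpotence of $\mathcal{J}$); the paper's route stays inside the ternary linear-algebra machinery it has just developed and yields finer information, tying $\left\vert\mathbb{F}\right\vert$ to the characteristic and the number of generators. Two minor points of care in your write-up: commutativity of $\mathcal{U}\left(\mathbb{F}\right)$, which your nilradical variant uses, does hold here because $3$-fields are commutative by the paper's definition (so the Jacobson-radical version of the argument is not even needed); and in the Nakayama step one only needs $\mathcal{J}^{m}$ to be finitely generated as a module, which is automatic for a finite ring, so that step is sound as stated.
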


\begin{proof}
Clearly, each finite unital 3-field $\mathbb{F}$ is a $3$-vector space over
$\operatorname*{Prim}\mathbb{F}$. By {Corollary \ref{corr-uk}}, the
number of elements in $\mathbb{F}$ is $2^{n-1}\dfrac{\left\vert
\operatorname*{Prim}\mathbb{F}\right\vert ^{n}}{\left\vert \ker\phi
_{\mathbb{F}}\right\vert }=2^{n-1}\dfrac{\chi\left(  \mathbb{F}\right)  ^{n}
}{\left\vert \ker\phi_{\mathbb{F}}\right\vert }$. According to
{Theorem\ref{theor-fp}}, $\chi\left(  \mathbb{F}\right)  $ is a power of 2, which
$\left\vert \ker\phi_{\mathbb{F}}\right\vert $ must divide, and the result follows.
\end{proof}
For any polynomial $P=\sum_\nu a_\nu x^\nu$ in $\mathbb{Q}[x]$ we let
$\|P\|_2=\max_\nu|a_\nu|_2$ (what some authors call the \emph{Gauss norm}).
Then $\|P\|_2\leq 1$ if and only if
$P\in\mathcal{U}(\mathbb{Q}^{\mathrm{odd}})[x]$, and $\|P\|_2=1$ iff,
moreover, $P$
has at least one coefficient in $\mathbb{Q}^{\mathrm{odd}}$.
Then, for any $P,Q\in\mathbb{Q}[x]$, we have
$\|PQ\|_2=\|P\|_2\|Q\|_2$.
This follows from the fact that
the product of two polynomials in $\mathbb{Z}[x]$, having both at least one
odd coefficient, possesses itself at least one odd coefficient (see, for
example, \cite[pp.35--43 ]{bgr}).

The following Lemma follows from these properties of the Gauss norm.
\begin{lemma}\
\label{lem-pid}
The irreducible polynomials for the ring $\mathcal{U}(\mathbb{Q}^{\mathrm{odd}})[x]$,
i.e. those polynomials that cannot be factored
into the product of two polynomials, are:
the constant polynomial 2 and those polynomials $P$
which are irreducible in $\mathbb{Q}[x]$,
and for which $\|P\|_2=1$.
\end{lemma}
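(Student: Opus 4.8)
The plan is to recognize the coefficient ring $\mathcal{U}(\mathbb{Q}^{\mathrm{odd}})$ as a principal ideal domain and then run the standard Gauss-type argument, where the role of the content is played by the $2$-adic norm $\|\cdot\|_2$. First I would record that $\mathcal{U}(\mathbb{Q}^{\mathrm{odd}})=\{p/q\mid p\in\mathbb{Z},\ q\in\mathbb{Z}^{\mathrm{odd}}\}$ is precisely the localization of $\mathbb{Z}$ at the prime $2$; it is a principal ideal domain (indeed a discrete valuation ring) whose units are exactly the elements with $|\cdot|_2=1$, that is, the members of $\mathbb{Q}^{\mathrm{odd}}$, whose maximal ideal is generated by $2$, and every nonzero nonunit of which is an associate of a power of $2$. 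Its field of fractions is $\mathbb{Q}$, and by the discussion preceding the lemma, for $P\in\mathbb{Q}[x]$ one has $\|P\|_2\le 1$ iff $P\in\mathcal{U}(\mathbb{Q}^{\mathrm{odd}})[x]$, while $\|P\|_2=1$ records in addition that $P$ is \emph{primitive}, i.e.\ not all of its coefficients lie in $2\,\mathcal{U}(\mathbb{Q}^{\mathrm{odd}})$.

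For the forward direction, suppose $P$ is irreducible in $\mathcal{U}(\mathbb{Q}^{\mathrm{odd}})[x]$. If $P$ is constant, then it is irreducible in $\mathcal{U}(\mathbb{Q}^{\mathrm{odd}})$, and since every nonzero nonunit there is an associate of a power of $2$, $P$ must be an associate of $2$, which is the first listed case. If $\deg P\ge 1$, I would first note $\|P\|_2=1$: otherwise $\|P\|_2<1$ forces every coefficient into $2\,\mathcal{U}(\mathbb{Q}^{\mathrm{odd}})$, giving the nontrivial factorization $P=2\cdot(P/2)$ with $P/2$ nonconstant, contradicting irreducibility. Next, to see $P$ is irreducible over $\mathbb{Q}$, suppose $P=AB$ with $A,B\in\mathbb{Q}[x]$ of positive degree. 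Writing $A=2^{m}A_0$ and $B=2^{k}B_0$ with $A_0,B_0$ primitive in $\mathcal{U}(\mathbb{Q}^{\mathrm{odd}})[x]$ (so $|2^m|_2=\|A\|_2$, $|2^k|_2=\|B\|_2$), the multiplicativity $\|A_0B_0\|_2=\|A_0\|_2\|B_0\|_2=1$ shows $A_0B_0$ is primitive; hence $\|P\|_2=|2^{m+k}|_2$ together with $\|P\|_2=1$ forces $2^{m+k}$ to be a unit, i.e.\ $m+k=0$. Then $P=A_0B_0$ is a factorization inside $\mathcal{U}(\mathbb{Q}^{\mathrm{odd}})[x]$ into two positive-degree (hence nonunit) factors, again a contradiction. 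So $P$ is irreducible in $\mathbb{Q}[x]$ with $\|P\|_2=1$.

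For the reverse direction, the constant $2$ is irreducible in $\mathcal{U}(\mathbb{Q}^{\mathrm{odd}})[x]$ since any factorization of it is by constants and $2$ is already irreducible in the valuation ring. If instead $\|P\|_2=1$ and $P$ is irreducible over $\mathbb{Q}$ (so $\deg P\ge 1$), I would take any factorization $P=fg$ in $\mathcal{U}(\mathbb{Q}^{\mathrm{odd}})[x]$; viewed in $\mathbb{Q}[x]$, irreducibility forces one factor, say $f$, to be a constant $c\in\mathcal{U}(\mathbb{Q}^{\mathrm{odd}})$. Then $1=\|P\|_2=|c|_2\,\|g\|_2$ with $|c|_2\le 1$ and $\|g\|_2\le 1$ forces $|c|_2=1$, so $c$ is a unit and the factorization is trivial. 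Hence $P$ is irreducible, completing both listed cases.

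The only genuine content beyond bookkeeping is the multiplicativity $\|PQ\|_2=\|P\|_2\|Q\|_2$ already supplied before the lemma (the Gauss lemma for the $2$-adic norm), so the argument is essentially the classification of irreducibles in a polynomial ring over a UFD specialized to this ring. The step to handle with care is the rescaling by powers of $2$ in the forward direction, where one must check that the $\mathbb{Q}[x]$-factorization descends to a bona fide factorization in $\mathcal{U}(\mathbb{Q}^{\mathrm{odd}})[x]$ without introducing a hidden nonunit or a denominator, and this is exactly what the equality $m+k=0$ guarantees.
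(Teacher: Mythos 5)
Your proof is correct and follows exactly the route the paper intends: the lemma is stated there without proof, immediately after the multiplicativity $\|PQ\|_2=\|P\|_2\|Q\|_2$ is established, and that Gauss-lemma ingredient is precisely what your argument turns on. Your write-up, which identifies $\mathcal{U}(\mathbb{Q}^{\mathrm{odd}})$ as the discrete valuation ring $\mathbb{Z}$ localized at $2$ and runs the standard classification of irreducibles in a polynomial ring over such a ring (constants give associates of $2$; positive-degree irreducibles are the primitive, i.e.\ $\|\cdot\|_2=1$, polynomials irreducible over the fraction field $\mathbb{Q}$), simply supplies the details the paper leaves to the reader.
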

\begin{definition}
Let $R$ be a unital 3-ring. We call $p\in \mathcal{Q}(R)$ \emph{completely even}
if and only if in every factorization of $p$ within $\mathcal{U}(R)$, factors are units of $R$ or elements from $\mathcal{Q}(R)$.
\end{definition}
We will be concerned here with the polynomial ring $\mathbb{F}[x]$ in which a polynomial
$P$ with coefficients in $\mathcal{U}(\mathbb{F})$ is completely even
if and only if
\begin{enumerate}
\item $P$ belongs to $\mathbb{F}[x]^{\mathrm{even}}$, and
\item $P$ factors only into odd factors which are units or even polynomials.
\end{enumerate}

\begin{theorem}
Suppose $\mathbb{F}$ is a prime field and that $P_0$ is any polynomial in $\mathbb{F}[x]^{\mathrm{even}}$.
Then $\mathbb{F}[x]\diagup\langle P_0\rangle$ is a unital 3-field if and only if $P_0$ is
completely even.
\end{theorem}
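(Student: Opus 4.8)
The plan is to read the condition straight off Theorem \ref{th-maxi}. Write $R=\mathcal{U}(\mathbb{F})[x]=\mathcal{U}(\mathbb{F}[x])$ and $\mathcal{I}=\langle P_0\rangle$. That theorem says $\mathbb{F}[x]\diagup\langle P_0\rangle$ is a unital $3$-field precisely when every proper ideal $\mathcal{J}$ of $R$ with $P_0\in\mathcal{J}$ satisfies $\mathcal{J}\cap\mathbb{F}[x]=\varnothing$, i.e.\ contains no odd polynomial. Now $\mathbb{F}[x]^{\mathrm{even}}=\mathcal{Q}(\mathbb{F}[x])$ is the kernel of the ring map $R\to\mathbb{Z}\diagup 2\mathbb{Z}$ sending a polynomial to the class of its coefficient-sum (evaluation at $1$ followed by reduction), so it is itself a maximal ideal of $R$ with residue field $\mathbb{Z}\diagup 2\mathbb{Z}$, and mod $2$ it corresponds to the prime $(x-1)$. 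Since a proper ideal meets $\mathbb{F}[x]$ iff it is not contained in $\mathbb{F}[x]^{\mathrm{even}}$, enlarging to a maximal ideal by Krull's theorem shows that the $3$-field property is equivalent to: $\mathbb{F}[x]^{\mathrm{even}}$ is the \emph{unique} maximal ideal of $R$ containing $P_0$. The whole argument then consists in matching this uniqueness with complete evenness.

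For the easy implication I argue contrapositively. Suppose $P_0=Q\cdot S$ is a nontrivial factorization, up to units, in which $Q$ is odd. Then $Q$ is a non-unit, so the principal ideal $(Q)\subseteq R$ is proper, contains $P_0$, and contains the odd element $Q$; hence $(Q)\cap\mathbb{F}[x]\neq\varnothing$, and by Theorem \ref{th-maxi} this blocks the $3$-field property. The multiplicativity $\|PQ\|_2=\|P\|_2\|Q\|_2$ and the description of the units of $R$ are what guarantee that oddness survives the bookkeeping and that $Q$ is a genuine non-unit.

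For the hard implication I assume $\mathbb{F}[x]\diagup\langle P_0\rangle$ is not a $3$-field and manufacture an odd factor of $P_0$. By the reduction above there is a maximal ideal $\mathcal{M}\neq\mathbb{F}[x]^{\mathrm{even}}$ with $P_0\in\mathcal{M}$, and two cases occur according to whether $2\in\mathcal{M}$. If $2\notin\mathcal{M}$, then by Lemma \ref{lem-pid} $\mathcal{M}$ is generated by an irreducible $f$ whose reduction $\bar f$ is constant; such an $f$ has coefficient-sum $\equiv\bar f(1)=1$, so it is odd and divides $P_0$ directly. If $2\in\mathcal{M}$, then reduction mod $2$ identifies $\mathcal{M}$ with a maximal ideal $(\bar g)$ of $\mathbb{F}_2[x]$ with $\bar g\neq(x-1)$ and $\bar g\mid\overline{P_0}$; using Lemma \ref{lem-pid} together with Hensel's lemma—passing to the $2$-adic completion $\mathcal{U}(\mathbb{Q}_2^{\mathrm{odd}})=\mathbb{Z}_2$—this factor of $\overline{P_0}$ lifts to an honest divisor $g\mid P_0$ with $\bar g\neq(x-1)$, whose coefficient-sum is $\equiv\bar g(1)=1$, hence odd. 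Gauss's lemma (again the multiplicativity of $\|\cdot\|_2$) shows the complementary factor is a non-unit, so this is a nontrivial odd factorization and $P_0$ is not completely even.

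The step I expect to be the main obstacle is exactly the lifting in the previous paragraph, because the odd factor one is after need not exist already over $\mathcal{U}(\mathbb{F})$. For instance $x^3+2x+1$ is irreducible over $\mathbb{Q}^{\mathrm{odd}}$ yet splits $2$-adically, via $\overline{x^3+2x+1}=(x-1)(x^2+x+1)$, into an even times an odd factor; this is why complete evenness must be tested after $2$-adic completion and why Hensel's lemma is what makes the mod-$2$ obstruction $\bar g\neq(x-1)$ visible as a genuine odd divisor. A companion subtlety is the leading coefficient: a factor whose roots lie outside the unit disk reduces to a constant mod $2$ and must \emph{not} be counted as odd—this is exactly the reason $2x^2+x+1$ (with $\overline{2x^2+x+1}=x-1$) still yields a $3$-field although it does split off such a factor—so the correct invariant is the factorization of $\overline{P_0}$ rather than naive divisibility of $P_0$. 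Finally, to cover the finite prime fields $(\mathbb{Z}\diagup 2^n\mathbb{Z})^{\mathrm{odd}}$ I would transport the statement from $\mathbb{Q}^{\mathrm{odd}}$ along the surjection of Theorem \ref{theor-fp} and the ideal correspondence of Example \ref{Qodd-ideals}, checking that both complete evenness and the maximal-ideal condition descend to the quotient.
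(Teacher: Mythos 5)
Your reduction via Theorem \ref{th-maxi} and Krull's theorem --- the quotient is a $3$-field iff $\mathbb{F}[x]^{\mathrm{even}}$ is the \emph{only} maximal ideal of $\mathcal{U}(\mathbb{F})[x]$ containing $P_0$ --- is correct, and your easy direction is exactly the paper's. In the hard direction you genuinely diverge: the paper takes a proper ideal $\mathfrak{I}\supseteq\langle P_0\rangle$, passes to the ideal $\mathfrak{I}_{\mathbb{Q}}$ it generates in $\mathbb{Q}[x]$, writes $\mathfrak{I}_{\mathbb{Q}}=\mathbb{Q}[x]P$ with $P_0=PQ$, and concludes from complete evenness that $P$ and $Q$ are even; you instead classify the maximal ideals of $\mathbb{Z}_{(2)}[x]=\mathcal{U}(\mathbb{Q}^{\mathrm{odd}})[x]$ according to whether they contain $2$, handling the first class by the Gauss-norm argument (correctly) and the second by Hensel lifting. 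The gap sits in your case $2\in\mathcal{M}$: Hensel's lemma produces a factorization of $P_0$ with coefficients in $\mathbb{Z}_2=\mathcal{U}(\mathbb{Q}_2^{\mathrm{odd}})$, \emph{not} in $\mathbb{Z}_{(2)}$, and in general the odd factor has no rational avatar. Since ``completely even'' as defined in the paper quantifies over factorizations with coefficients in $\mathcal{U}(\mathbb{F})$, exhibiting a $\mathbb{Z}_2[x]$-divisor does not negate it, so your contrapositive does not close. Nor can you simply re-read complete evenness $2$-adically, as you propose: your own example $2x^2+x+1$ (which does split off an odd non-unit factor over $\mathbb{Z}_2[x]$, yet yields a $3$-field) shows that with that reading the \emph{easy} direction fails. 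What your two cases actually establish is a mixed criterion --- the quotient is a $3$-field iff $P_0$ has no odd non-unit factor in $\mathbb{Z}_{(2)}[x]$ \emph{and} $\overline{P_0}$ is a nonzero power of $x-1$ modulo $2$ --- which is a correct theorem, but not the stated one.

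You should also know that this defect is not one you could have repaired, because the statement read literally is false over $\mathbb{F}=\mathbb{Q}^{\mathrm{odd}}$, and your first example already witnesses this. Indeed $P_0=x^3+2x+1$ is irreducible in $\mathbb{Q}[x]$ with $\|P_0\|_2=1$, hence irreducible in $\mathbb{Z}_{(2)}[x]$ by Lemma \ref{lem-pid}, hence completely even in the paper's sense; yet $\mathcal{M}=(2,\,x^2+x+1)$ is a proper ideal of $\mathbb{Z}_{(2)}[x]$ (the quotient is the field with four elements) containing $P_0=(x-1)(x^2+x+1)+2(x+1)$ together with the odd polynomial $x^2+x+1$, so by Theorem \ref{th-maxi} the quotient is \emph{not} a $3$-field. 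The paper's own proof breaks at precisely the point your second case probes: for such an $\mathfrak{I}$ one has $\mathfrak{I}_{\mathbb{Q}}=\mathbb{Q}[x]$, the factor $P$ is a unit, and the asserted step ``it follows that $P$ and $Q$ are even'' is wrong. (Over the finite prime fields, where $\mathcal{U}(\mathbb{F})=\mathbb{Z}\diagup 2^{n}\mathbb{Z}$ is already complete so that Hensel lifting stays inside $\mathcal{U}(\mathbb{F})[x]$, your argument is essentially sound, and the statement is true there whenever $2\nmid P_0$.) In short: your proposal does not prove the theorem as stated, but the obstruction you located is real; it is the statement --- the definition of complete evenness must be strengthened by the mod-$2$ condition you identified --- and the paper's proof that need amending, not your case analysis.
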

\begin{proof}
Clearly, whenever $P_0$ has a factorization $P_0=QP$, with $Q$ a non-invertible odd polynomial,
$\langle Q\rangle$ is an ideal larger than $\langle P_0\rangle$, strictly smaller than
$\mathbb{F}[x]^{\mathrm{even}}$, and intersecting $\mathbb{F}[x]^{\mathrm{odd}}$. So
$\mathbb{F}[x]\diagup\langle P_0\rangle$ cannot be a 3-field in light of Theorem \ref{th-maxi}.

For the converse, suppose $P_0$ is completely even. Additionally, we
assume that $P_0$ does not contain any invertible odd factor, not affecting the following argument.

We first look at the case in which $|\mathbb{F}|=\infty$.
For an ideal $\mathfrak{I}\in \mathcal{U}[x]$ we write $\mathfrak{I}_\mathbb{Q}$ for the ideal $\mathfrak{I}$ generates
in $\mathbb{Q}[x]$, i.e.\
\begin{equation}
    \mathfrak{I}_\mathbb{Q}=\set{2^{-n}P}{P\in \mathfrak{I},\ n\in \mathbb{N}}.
\end{equation}
Since $(\mathcal{U}[x]P_0)_\mathbb{Q}$ is the principal ideal generated by $P_0$ in $\mathbb{Q}[x]$,
we find for any ideal $\mathfrak{I}$ of (and different from) $\mathcal{U}[x]$, larger than $\mathcal{U}[x]P_0$,
a factorization $P_0=PQ$ so that $\mathfrak{I}_\mathbb{Q}=\mathbb{Q}[x]P$. Since for no $n\in \mathbb{N}$, $2^{-n}$ is
a factor of $P_0$, we actually may suppose that $P,Q\in \mathcal{U}[x]$ and still have
$\mathfrak{I}_\mathbb{Q}=\mathbb{Q}[x]P$. As $P_0$ was assumed to be completely even
(and not containing any factor that is a unit) $P$ and $Q$ have to be even and so
\begin{equation}
       \mathfrak{I}\cap \mathcal{U}[x]^{\mathrm{odd}}\subseteq
     \mathbb{Q}[x]P\cap \mathcal{U}[x]^{\mathrm{odd}}=\emptyset,
\end{equation}
proving that $\mathbb{F}[x]\diagup\langle P_0\rangle$ is a field, by Theorem \ref{th-maxi}.

Now suppose $|\mathbb{F}|=2^n$ and denote by
\begin{equation}
\pi_n: \mathbb{Q}^{\mathrm{odd}}[x]\to (\mathbb{Z}\diagup 2^n \mathbb{Z})^{\mathrm{odd}}[x]
\end{equation}
the canonical quotient map,
reducing the coefficients of elements in $\mathbb{Q}^{\mathrm{odd}}[x]$ to coefficients in
$(Z\diagup 2^n Z)^{\mathrm{odd}}$. We again will suppose that $P_0$ does not contain any invertible odd factor.
Fix a polynomial $P_1\in\mathbb{Q}^{\mathrm{even}}[x]$, of the same degree as $P_0$, such that $\pi_n(P_1)=P_0$.
The map $\pi_n$ decreases degrees of polynmomials, preserving the one of $P_1$, hence any factorization of the latter polynomial into non-even factors would result into one of the same kind for $P_0$. Therefore $P_1$ has to
be completely even as well, and so by the first part of the proof, $\mathbb{Q}^{\mathrm{odd}}[x]\diagup\langle P_1\rangle$ is a
unital 3-field.
The result then follows from the fact that for $|\mathbb{F}|=2^n$
\begin{equation}
    \mathbb{F}[x]\diagup\langle P_0\rangle=
    \pi_n\left(\mathbb{Q}^{\mathrm{odd}}[x]\diagup\langle P_1\rangle\right).
\end{equation}
\end{proof}

\begin{corollary}
Each finite, singly generated unital 3-field $\mathbb{F}$ is the quotient of a
singly generated unital 3-field $\widehat{\mathbb{F}}$ with prime field
$\mathbb{Q}^{\mathrm{odd}}$.
\end{corollary}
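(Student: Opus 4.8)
The plan is to realize $\mathbb{F}$ concretely as a principal quotient of a polynomial $3$-algebra over its prime field, and then to lift the defining polynomial from the finite prime field up to $\mathbb{Q}^{\mathrm{odd}}$. First I would invoke Theorem \ref{theor-fp}: since $\mathbb{F}$ is finite, its prime field is $\operatorname{Prim}\mathbb{F}\cong(\mathbb{Z}\diagup 2^{n}\mathbb{Z})^{\mathrm{odd}}$ for some $n\in\mathbb{N}_0$. Because $\mathbb{F}$ is singly generated over $\operatorname{Prim}\mathbb{F}$, the universality of the polynomial algebra supplies a surjection $\operatorname{Prim}\mathbb{F}[x]\to\mathbb{F}$ and an isomorphism $\mathbb{F}\cong\operatorname{Prim}\mathbb{F}[x]\diagup\mathcal{I}$ for a suitable ideal $\mathcal{I}$. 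The point of contact with the previous results is that, since $\mathbb{F}$ is a $3$-field, the completely-even criterion (the theorem immediately preceding) forces $\mathcal{I}=\langle P_0\rangle$ with $P_0\in\operatorname{Prim}\mathbb{F}[x]^{\mathrm{even}}$ completely even.

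Next I would transport $P_0$ to the prime field $\mathbb{Q}^{\mathrm{odd}}$. Using Theorem \ref{theor-fp} again in the form $(\mathbb{Z}\diagup 2^n\mathbb{Z})^{\mathrm{odd}}\cong\mathbb{Q}^{\mathrm{odd}}\diagup\mathcal{J}_n$, consider the coefficient–reduction map $\pi_n:\mathbb{Q}^{\mathrm{odd}}[x]\to(\mathbb{Z}\diagup 2^n\mathbb{Z})^{\mathrm{odd}}[x]$ already used above, and choose a polynomial $P_1\in\mathcal{U}(\mathbb{Q}^{\mathrm{odd}})[x]$ of the same degree as $P_0$ with $\pi_n(P_1)=P_0$. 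Since $\pi_n$ preserves the parity of each coefficient, and an odd non-unit factor of $P_1$ necessarily has positive degree (odd constants being units), any such factorization would reduce under $\pi_n$ to an odd non-unit factorization of $P_0$; hence $P_1$ is again completely even. By the completely-even criterion applied over $\mathbb{Q}^{\mathrm{odd}}$, the quotient
\begin{equation}
\widehat{\mathbb{F}}:=\mathbb{Q}^{\mathrm{odd}}[x]\diagup\langle P_1\rangle
\end{equation}
is a unital $3$-field, manifestly singly generated by the class of $x$. Its prime field is $\mathbb{Q}^{\mathrm{odd}}$, because the constants embed (as $\deg P_1>0$, or directly in the degenerate case $\widehat{\mathbb{F}}=\mathbb{Q}^{\mathrm{odd}}$), so $\widehat{\mathbb{F}}$ contains an infinite prime subfield, which Theorem \ref{theor-fp} identifies with $\mathbb{Q}^{\mathrm{odd}}$.

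It remains to produce the quotient map. Since $\pi_n(P_1)=P_0$ and $\pi_n$ is a ring morphism on the $\mathcal{U}$-level, it carries $\langle P_1\rangle$ into $\langle P_0\rangle$ and therefore descends to a surjective morphism of unital $3$-fields $\widehat{\mathbb{F}}=\mathbb{Q}^{\mathrm{odd}}[x]\diagup\langle P_1\rangle\to\operatorname{Prim}\mathbb{F}[x]\diagup\langle P_0\rangle\cong\mathbb{F}$, which mirrors the final identity of the preceding proof. Exhibiting this surjection completes the argument, with $\widehat{\mathbb{F}}$ the desired singly generated $3$-field over $\mathbb{Q}^{\mathrm{odd}}$.

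I expect the main obstacle to lie in the first step: justifying that the defining ideal $\mathcal{I}$ may be taken principal, generated by a single completely even $P_0$. The completely-even criterion is stated only for principal quotients $\mathbb{F}[x]\diagup\langle P_0\rangle$, whereas universality a priori only yields $\mathbb{F}\cong\operatorname{Prim}\mathbb{F}[x]\diagup\mathcal{I}$ for an ideal of $\mathcal{U}(\operatorname{Prim}\mathbb{F})[x]=(\mathbb{Z}\diagup 2^n\mathbb{Z})[x]$, which is not a principal ideal ring once $n\geq 2$. I would close this gap by analyzing, with the help of Lemma \ref{lem-pid} and the evenly-maximal condition of Theorem \ref{th-maxi}, exactly which ideals can occur as kernels producing a $3$-field quotient, and showing each such is generated by a completely even polynomial. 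The remaining steps — the parity-preserving lift and the descent of $\pi_n$ — are then routine given the machinery already in place.
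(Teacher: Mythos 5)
Your route is the one the paper implicitly intends (the corollary is stated without proof, as fallout of the lifting construction $\pi_n(P_1)=P_0$ inside the proof of the completely-even theorem), but the gap you flag in your final paragraph is fatal to the proposal as written, and your plan for closing it cannot succeed: it is simply false that a finite, singly generated unital $3$-field must be a \emph{principal} quotient $\operatorname{Prim}\mathbb{F}[x]\diagup\langle P_0\rangle$ once $\chi(\mathbb{F})>1$. Take
\begin{equation}
\mathcal{R}=(\mathbb{Z}\diagup 4\mathbb{Z})[y]\diagup\langle y^{2},\,2y\rangle ,
\end{equation}
a finite local ring with maximal ideal $\mathcal{M}=\{0,2,y,2+y\}$ and residue field $\mathbb{Z}\diagup 2\mathbb{Z}$; by the $3$-fields-and-local-rings theorem, $\mathbb{F}=\mathcal{R}\setminus\mathcal{M}=\{1,3,1+y,3+y\}$ is a finite unital $3$-field. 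It is singly generated by $a=1+y$, since $a^{2}=1$, $a+a+1=3$, $a+1+1=3+y$ (recall $2y=0$), and its prime field is $\{1,3\}\cong(\mathbb{Z}\diagup 4\mathbb{Z})^{\mathrm{odd}}$. The kernel of the evaluation morphism $(\mathbb{Z}\diagup 4\mathbb{Z})[x]=\mathcal{U}(\operatorname{Prim}\mathbb{F})[x]\to\mathcal{R}$, $x\mapsto a$, contains $(x-1)^{2}$ and $2(x-1)$, and a count of elements ($8$ on both sides) shows it equals $\langle(x-1)^{2},\,2(x-1)\rangle$. This ideal is not principal: a generator $Q$ would have to reduce mod $2$ to a generator of the image ideal $\langle(x+1)^{2}\rangle$, so $Q=(x-1)^{2}+2R$; writing $2(x-1)=QS$ and reducing mod $2$ forces $S=2S'$, whence $2(x-1)=2(x-1)^{2}S'$ and so $(x+1)=(x+1)^{2}\bar{S}'$ in $(\mathbb{Z}\diagup 2\mathbb{Z})[x]$, which is impossible. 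So ``showing each such kernel is generated by a completely even polynomial'' is a dead end; principality is special to $\chi(\mathbb{F})=1$, where $\mathcal{U}(\mathbb{F}_0)[x]=(\mathbb{Z}\diagup 2\mathbb{Z})[x]$ is a PID --- which is exactly how the paper argues in Theorem \ref{theor-f}. (Your lifting step has a secondary flaw as well: an odd non-unit factor over $\mathbb{Q}^{\mathrm{odd}}$, such as $1+2x$, reduces under $\pi_n$ to a \emph{unit} of $(\mathbb{Z}\diagup 2^{n}\mathbb{Z})[x]$, so complete evenness does not lift as automatically as you claim.)

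The corollary itself survives because you need far less than principality: it suffices to exhibit one completely even polynomial \emph{inside} the kernel, not one generating it. Since $\mathcal{U}(\mathbb{F})$ is a finite local ring, every element $q$ of its maximal ideal $\mathcal{Q}(\mathbb{F})$ is nilpotent ($q^{k}=q^{k+m}$ for some $k,m\geq 1$, and $1-q^{m}$ is odd, hence invertible); in particular $(a-1)^{N}=0$ for the generator $a$ and some $N$. Now take the surjection $\varphi:\mathbb{Q}^{\mathrm{odd}}[x]\to\mathbb{F}$, $x\mapsto a$, supplied by the universality theorem, so that $(x-1)^{N}\in\ker\mathcal{U}(\varphi)$. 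By Lemma \ref{lem-pid} and unique factorization in $\mathcal{U}(\mathbb{Q}^{\mathrm{odd}})[x]$, every non-unit factor of $(x-1)^{N}$ is a unit multiple of a positive power of $x-1$, hence even, so $(x-1)^{N}$ is completely even; the completely-even criterion then makes
\begin{equation}
\widehat{\mathbb{F}}=\mathbb{Q}^{\mathrm{odd}}[x]\diagup\left\langle (x-1)^{N}\right\rangle\cong\mathbb{Q}^{\mathrm{odd}}(N)
\end{equation}
a unital $3$-field. It is singly generated by the class of $x$, its prime field is $\mathbb{Q}^{\mathrm{odd}}$ (the constants embed, so the prime field is infinite and Theorem \ref{theor-fp} identifies it), and $\varphi$ factors through $\widehat{\mathbb{F}}$ because $\langle(x-1)^{N}\rangle\subseteq\ker\mathcal{U}(\varphi)$, yielding the required surjection $\widehat{\mathbb{F}}\to\mathbb{F}$. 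This replacement makes your lift $P_0\mapsto P_1$ and the descent of $\pi_n$ unnecessary: one works over $\mathbb{Q}^{\mathrm{odd}}$ from the start. In the counterexample above, for instance, $\mathbb{F}$ is a quotient of $\mathbb{Q}^{\mathrm{odd}}[x]\diagup\langle(x-1)^{2}\rangle$, even though no principal presentation over its own prime field exists.
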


Factoring polynomials is no easy business under these circumstances. For the example below, we will use
\begin{lemma}
Let $R$ be a unital 3-ring and $p\in\mathcal{Q}(R)$.
If $\phi:R\to R'$ is a morphism respecting non-units and $\phi(p)$ is non-zero and completely even
  then $p$ is completely even.
\end{lemma}
\begin{proof}
The proof is easy: If there is a factorization of $p$ in which one of the factors is odd and not a unit,
it follows that the same type of factorization exists for $\phi(p)$, as $\phi$ respects
(non-units and) the distinction between even and odd.
\end{proof}
\begin{example}
Let $\mathbb{F}$ be any of the prime 3-fields and denote by $\phi: \mathbb{F}[x]\to \mathbb{F}_0[x]$ the morphism
reducing coefficients $\mod 2$. We claim that $\phi$ is a morphism as in the lemma:

Recall that in $R[x]$, $R$ a unital (binary) ring,
$P=a_0+\ldots+a_nx^n$ is a unit iff $a_0$ is a unit in $R$ and $a_\nu$ is nilpotent for all $\nu\geq 1$.
For a non-unit $P\in\mathbb{F}[x]$ then, either $a_0$ is even, in which case it is mapped to a non-unit
in $\mathbb{F}_0$, or, $a_0$ is a unit but one of the coefficients, $a_k$, $k>1$, is mapped to 1,
and so also in this case $\phi(P)$ is a non-unit.
\end{example}

\begin{example}
Let $\mathbb{F}$ be any of the prime 3-fields. Then $x^n-1$ is completely even iff $n$ is a power of two.
In fact, let $n=n_0 2^k$ with $n_0$ odd. Then
\begin{multline}
x^{n}-1=
\left(x^{n_0}-1\right)\left(x^{n_0}+1\right)\left(x^{2n_0}+1\right)\cdots
\left(x^{n_02^{k-1}}+1\right)=\\
=\left(x-1\right)\left(\sum_{m=0}^{n_{0}-1}x^m\right)
\prod_{j=0}^{k-1}\left(x^{2^jn_{0}}+1\right).
\end{multline}
If $n_0>1$, the polynomial $\sum_{m=0}^{n_{0}-1}x^m$ is odd (and not a unit),
so $x^n-1$ is not completely even. If, however, $n_0=1$ we invoke
the morphism $\phi$ from above that reduces coefficients to the (binary) field $\mathbb{Z}/2$.
Using the Frobenius morphism, for $n$ a power of 2, we have $x^n-1=(x-1)^n$,
and the claim follows.
\end{example}

We collect the above into
\begin{theorem}
Let $\mathbb{F}$ be any of the finite prime fields. Then, there is a splitting unital 3-field
for $\mathbb{F}$ and the polynomial $x^n-1$ if and only if $n$ is a power of 2.
\end{theorem}
Note that in classical (binary) field theory there is a splitting field for the finite
prime fields (essentially) for all the polynomials $x^n-1$.

Finally, we will consider the already quite difficult case of finite unital
3-fields for which the characteristic is one, $\chi\left(\mathbb{F}\right)=1$.

\begin{theorem}\label{Singly generated fields}
\label{theor-f}Suppose $\mathbb{F}$ is a finite unital $3$-field with $\chi(\mathbb{F})=1$,
and denote by $\mathbb{F}_0=\left\{ 1 \right\}$ its prime field. If $\mathbb{F}$ is generated by a single
element, then it is isomorphic to
$\mathbb{F}_0[x]\diagup\left\langle \left(x-1\right)^{n}\right\rangle$.
Furthermore,
\begin{equation}
\mathbb{F}_0[x]\diagup\left\langle \left(x-1\right)^{n}\right\rangle\cong\mathbb{F}_{0}(n),
\end{equation}
where $\mathbb{F}_{0}(n)$ was defined in Example~\ref{Definition F(n)}.
\end{theorem}

\begin{proof}
By the above result, and since $\mathcal{U}(\mathbb{F}_0)=\mathbb{Z}\diagup 2\mathbb{Z}$ is a (binary) field, there must be
a completely even polynomial $P\in(\mathbb{Z}\diagup 2\mathbb{Z})[x]$ so that
$\mathbb{F}=\mathbb{F}_0[x]\diagup\langle P\rangle$.
Writing a polynomial $Q\in(\mathbb{Z}\diagup 2\mathbb{Z})[x]$ in powers
of $(x-1)$ we have that $Q$ is even iff its constant term is even, and it is odd
iff its constant term is. Now, if the polynomial $P$ is not of the desired form,
there are $n_{0}<\ldots<n_{k}$ with
\begin{equation}
P=(x-1)^{n_{0}}+\ldots+(x-1)^{n_{k}}=(x-1)^{n_{0}}(1+\ldots+(x-1)^{n_{k}
-n_{0}}).
\end{equation}
Hence, such a polynomial is not completely even, and we are done.

Finally, $\mathbb{F}_0[x]\diagup\left\langle \left(x-1\right)^{n}\right\rangle\cong\mathbb{F}_{0}(n)$
is proven as follows. Consider the mapping $\Phi:\mathbb{F}_0[x]
\longrightarrow\mathbb{F}_0(n)$,
\begin{equation}
\Phi\left(Q\right)=
\left( Q\left( 1 \right),\ldots,\frac{Q^{\left( \nu \right)}\left( {1} \right)}{\nu!},
\ldots,\frac{Q^{\left( n-1 \right)}\left( {1} \right)}{(n-1)!} \right)\mod 2
\end{equation}
where only the coefficients of the respective polynomials in $\mathbb{F}_{0}(n)$ are written
down and (formal) derivatives are used. This map is a (surjective) field
morphism: it clearly is additive, while multiplicativity follows as the product
of Taylor polynomials of two given polynomials also in the present situation is the Taylor polynomial
of the their products.
Since the kernel of this map is equal to $\left\langle (x-1)^{n}\right\rangle$,
the second statement of the theorem follows.
\end{proof}

We look into some further examples and determine some automorphism groups.

\begin{lemma}
There is a 1-1 correspondence between the automorphisms of $\mathbb{F}_0(n)$ and
polynomials $P=\sum_{j} f_{j}x^{j}$, $f_{j}\in \mathcal{U}(\mathbb{F}_0(n))$,
$\sum f_{j}\in \mathbb{F}_0$,
 for which there exists another such polynomial $Q$
with $P\circ Q=Q\circ P=x $.
\end{lemma}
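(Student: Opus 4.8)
The plan is to realize every automorphism as a substitution determined by the image of the single generator. By Theorem \ref{theor-f} I may work with the concrete model $\mathbb{F}_0(n)\cong\mathbb{F}_0[x]\diagup\langle(x-1)^n\rangle$, which is generated as a $3$-algebra by the class of $x$. Since $\mathbb{F}_0$ is the prime field and is therefore fixed by every morphism, any $3$-algebra endomorphism $\Phi$ of $\mathbb{F}_0(n)$ is completely determined by $P:=\Phi(x)$ and conversely acts by substitution, $\Phi(R)=R(P)$. This is exactly what the Universality of polynomial algebras theorem yields when applied to the quotient map $\mathbb{F}_0[x]\to\mathbb{F}_0(n)$: a homomorphism out of $\mathbb{F}_0(n)$ is the same datum as a choice of image for $x$ that respects the defining relation.

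Next I would pin down the admissible $P$. Writing $P=\sum_j f_j x^j$, the substitution $R\mapsto R(P)$ descends to the quotient by $\langle(x-1)^n\rangle$ precisely when $(P-1)^n=0$ in $\mathbb{F}_0(n)$. Passing through the functor $\mathcal{U}$, the ring $\mathcal{U}(\mathbb{F}_0(n))\cong\mathcal{U}(\mathbb{F}_0)[x]\diagup\langle(x-1)^n\rangle$ is local with nilpotent maximal ideal $\mathcal{Q}(\mathbb{F}_0(n))$, so $(P-1)^n=0$ holds exactly when $P-1\in\mathcal{Q}(\mathbb{F}_0(n))$, i.e.\ when $P$ is a unit, i.e.\ when $\sum_j f_j=P(1)\in\mathbb{F}_0$. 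Thus endomorphisms of $\mathbb{F}_0(n)$ are in bijection with the polynomials $P$ of the stated form, via $\Phi\mapsto\Phi(x)$ with inverse $P\mapsto\Phi_P$, where $\Phi_P(R)=R(P)$; injectivity of $\Phi\mapsto\Phi(x)$ is immediate, as $P$ is recovered as $\Phi_P(x)$.

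To single out the automorphisms I would record the composition law, which a direct substitution check gives as $\Phi_P\circ\Phi_Q=\Phi_{Q\circ P}$ (all compositions reduced modulo $(x-1)^n$). Hence, whenever there is an admissible $Q$ with $P\circ Q=x$, we get $\Phi_Q\circ\Phi_P=\Phi_{x}=\mathrm{id}$, so $\Phi_P$ is injective; as $\mathbb{F}_0(n)$ is finite, an injective endomorphism is bijective, and $\Phi_P$ is an automorphism. Conversely, if $\Phi_P$ is an automorphism, then by the first paragraph its inverse is itself a substitution $\Phi_Q$ for a unique admissible $Q$, and unwinding $\Phi_Q\circ\Phi_P=\mathrm{id}$ yields $P\circ Q=x$. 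Combined with the bijection of the previous paragraph, this gives the asserted $1$--$1$ correspondence between $\mathrm{Aut}(\mathbb{F}_0(n))$ and the compositionally invertible polynomials $P$.

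The step I expect to demand the most care is the interface between well-definedness of the substitution on the quotient and the passage from the one-sided relation $P\circ Q=x$ to genuine invertibility: one must check that $\circ$ is associative and compatible with reduction modulo $(x-1)^n$, so that the endomorphism monoid is honestly (anti-)isomorphic to the substitution monoid, and it is the \emph{finiteness} of $\mathbb{F}_0(n)$ that upgrades a one-sided compositional inverse to a two-sided one and hence to an automorphism.
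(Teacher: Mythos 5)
Your proof is correct, but there is nothing in the paper to measure it against: the authors state this lemma bare (``To this end we will use\dots'') and then immediately apply it in the examples computing the automorphism groups of $\mathbb{F}_0(n)$ for $n=2,3,4,5$; no proof is given. So your argument supplies missing content rather than reproduces an existing argument, and it formalizes exactly what those examples do implicitly with their composition Cayley tables. The skeleton is sound at every step: endomorphisms are substitution maps $\Phi_P(R)=R(P)$, because every morphism fixes $\mathcal{U}(\mathbb{F}_0)=\mathbb{Z}\diagup 2\mathbb{Z}$ and $\mathbb{F}_0(n)$ is singly generated; the admissible $P$ are exactly the units of $\mathcal{U}(\mathbb{F}_0(n))$, since $\mathcal{Q}(\mathbb{F}_0(n))=\langle x-1\rangle$ is nilpotent of index $n$, so that $(P-1)^n=0$ iff $P-1\in\mathcal{Q}(\mathbb{F}_0(n))$ iff $\sum_j f_j\in\mathbb{F}_0$; and the contravariant composition law $\Phi_P\circ\Phi_Q=\Phi_{Q\circ P}$ together with finiteness of $\mathbb{F}_0(n)$ correctly converts the one-sided hypothesis $P\circ Q=x$ into bijectivity of $\Phi_P$, with the converse following because the inverse of an automorphism is again a substitution.

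Two points deserve an explicit sentence in a final write-up, both of which you handle silently. First, the correspondence is literally one-to-one only when the polynomials are taken modulo $(x-1)^n$, i.e.\ as elements of $\mathbb{F}_0(n)$: representatives differing by a multiple of $(x-1)^n$ induce the same automorphism. Working inside the quotient model, as you do, takes care of this, but since the lemma speaks of ``polynomials'' the normalization should be stated. Second, you replace the lemma's coefficient condition $f_j\in\mathcal{U}(\mathbb{F}_0(n))$ by $f_j\in\mathcal{U}(\mathbb{F}_0)$. This is the reading consistent with the paper's examples, and it is harmless: reducing coefficients modulo the maximal ideal shows that either reading produces the same set of admissible $P$, namely the units of $\mathcal{U}(\mathbb{F}_0(n))$. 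Neither point is a gap in your reasoning; both are ambiguities in the statement itself that your proof resolves in the only sensible way.
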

\begin{proof}
Suppose $\Psi$ is an automorphism of $\mathbb{F}_0(n)$. Let $P=\Psi(x)=\sum_{j} f_{j}x^{j}$.
Then $f_{j}\in \mathcal{U}(\mathbb{F}_0(n))$ and $\sum f_{j}\in \mathbb{F}_0$. Since the
polynomial $x$ is generating, $P$ uniquely determines $\Psi$. Furthermore,
$\Psi(\sum_{j} g_{j}x^{j})=\sum_{j} g_{j}P^{j}$, and so composition
of morphisms consists in substitution of polynomials. In particular, $\Psi^{-1}$
must correspond to a polynomial $Q$ with $P\circ Q=Q\circ P=x$.
\end{proof}
\begin{example}
Let us start with a classical field extension. For $\chi\left(  \mathbb{F}
\right)  =1$ we formally adjoin a square root of $3$ and obtain $\mathbb{F}
\left[  \sqrt{3}\right]  $. More explicitly,
\begin{equation}
\mathbb{F}\left[  \sqrt{3}\right]  =\left\{  a+b\sqrt{3}\mid a,b\in
\mathbb{Z}\diagup2\mathbb{Z},a+b=1\right\}  ,
\end{equation}
and it turns out that this $3$-field is isomorphic to $\mathbb{F}\left(
2\right)  $. Note that this field has a trivial automorphism group.
\end{example}

\begin{example}
Let us have a look at $\mathbb{F}\left(  3\right)  ,$ where $\mathbb{F}$ is as
before. We use polynomials in the generator $x$ and denote the elements $a=x$,
$b=x^{2}$, $c=x^{2}+x+1$, $a,b,c,1\in\mathbb{F}\left(  3\right)  $. The
multiplicative Cayley table is\textbf{ }

\begin{equation}
\begin{tabular}
[c]{||c||c|c|c|c||}\hline\hline
$\cdot$ & $1$ & $a$ & $b$ & $c$\\\hline\hline
$1$ &$ 1 $ & $ a $ & $ b $ & $ c$\\\hline
$a$ &$ a $ & $ b $ & $ c $ & $ 1$\\\hline
$b$ &$ b $ & $ c $ & $ 1 $ & $ a$\\\hline
$c$ &$ c $ & $ 1 $ & $ a $ & $ c$\\\hline\hline
\end{tabular}
\end{equation}
To find nontrivial automorphisms on $\mathbb{F}\left(  3\right)  $ we
construct the Cayley table under compositions as
\begin{equation}
\begin{tabular}
[c]{|c||c|c|c|}\hline
$\circ$ & $a$ & $b$ & $c$\\\hline\hline
$a$ & $a $ & $ b $ & $ c$\\\hline
$b$ &$ b $ & $ 1 $ & $ b$\\\hline
$c$ &$ c $ & $ b $ & $ a=x$\\\hline
\end{tabular}
\end{equation}
We observe that one nontrivial automorphism on $\mathbb{F}\left(  3\right)  $
is connected to $c$, because $c\circ c=a=x$. So the group of automorphisms is
$\mathbb{Z}\diagup 2\mathbb{Z}$, or, for later use, the dihedral group of order 2, respectively.
\end{example}

\begin{example}
Let us denote elements of $\mathbb{F}\left(  4\right)  $ as
\begin{equation}
\begin{tabular}
[c]{|c|c|c|c|c|c|c|}\hline
$a$ & $b$ & $c$ & $d$ & $e$ & $f$ & $g$\\\hline
$x$ & $x^{2}$ & $x^{3}$ & $x^{2}+x+1$ & $x^{3}+x+1$ & $x^{3}+x^{2}+1$ &
$x^{3}+x^{2}+x$\\\hline
\end{tabular}
\end{equation}
The multiplicative Cayley table for $\mathbb{F}\left(  4\right)  $ is
\begin{equation}
\begin{tabular}
[c]{||c||cccccccc||}\hline\hline
$\cdot $ & $1$ & $a$ & $b$ & $c$ & $d$ & $e$ & $f$ & $g$\\\hline\hline
$1$ & $ 1 $ & $ a $ & $ b $ & $ c $ & $ d $ & $ e $ & $ f $ & $ g$\\
$a$ & $a $ & $ b $ & $ c $ & $ 1 $ & $ g $ & $ d $ & $ e $ & $ f$\\
$b$ &$ b $ & $ c $ & $ 1 $ & $ a $ & $ f $ & $ g $ & $ d $ & $ e$\\
$c$ & $c $ & $ 1 $ & $ a $ & $ b $ & $ e $ & $ f $ & $ g $ & $ d$\\
$d$ & $d $ & $ g $ & $ f $ & $ e $ & $ b $ & $ a $ & $ 1 $ & $ c$\\
$e$ & $e $ & $ d $ & $ g $ & $ f $ & $ a $ & $ 1 $ & $ c $ & $ b$\\
$f$ & $f $ & $ e $ & $ d $ & $ g $ & $ 1 $ & $ c $ & $ b $ & $ a$\\
$g$ & $g $ & $ f $ & $ e $ & $ d $ & $ c $ & $ b $ & $ a $ & $ 1$\\\hline\hline
\end{tabular}
\end{equation}

To find nontrivial automorphisms we make use of the Cayley table w.r.t.
composition, and we find that the (nontrivial) automorphisms are exactly those
given by the polynomials $c,d,f$. Composition is then given by the following
table
\begin{equation}
\begin{tabular}
[c]{||c||cccc||}\hline\hline
$\circ$ & $a$ & $c$ & $d$ & $f$\\\hline\hline
$a$ & $a=x$ & $c $ & $ d $ & $ f$\\
$c$ &$ c $ & $ a=x$  & $ f $ & $ d$\\
$d$ &$ d $ & $ f $ & $ a=x$  & $ c$\\
$f$ & $f $ & $ d $ & $ c $ & $ a=x$\\\hline\hline
\end{tabular}
\end{equation}
which upon inspection yields that the automorphism group equals the dihedral
group of order 4.
\end{example}

\begin{example}
Let us denote elements of $\mathbb{F}\left(  5\right)  $ as
\begin{align}
&
\begin{tabular}
[c]{|c|c|c|c|c|c|c|}\hline
$a$ & $b$ & $c$ & $d$ & $e$ & $f$ & $g$\\\hline
$x$ & $x^{2}$ & $x^{3}$ & $x^{4}$ & $x^{2}+x+1$ & $x^{3}+x+1$ & $x^{4}
+x+1$\\\hline
\end{tabular}
\nonumber\\
&
\begin{tabular}
[c]{|c|c|c|c|}\hline
$p$ & $q$ & $r$ & $s$\\\hline
$x^{3}+x^{2}+1$ & $x^{4}+x^{2}+1$ & $x^{4}+x^{3}+1$ & $x^{3}+x^{2}+x$\\\hline
\end{tabular}
\nonumber\\
&
\begin{tabular}
[c]{|c|c|c|c|}\hline
$t$ & $u$ & $v$ & $w$\\\hline
$x^{4}+x^{2}+x$ & $x^{4}+x^{3}+x$ & $x^{4}+x^{3}+x^{2}$ & $x^{4}+x^{3}
+x^{2}+x+1$\\\hline
\end{tabular}
\end{align}
Then similar calculations give us the nontrivial automorphisms induced by the
polynomials $c,e,g,p,r,t,v$. Together with the identity morphism $a=x$ they
obey the following Cayley table under composition
\begin{equation}
\begin{tabular}
[c]{||c||cccccccc||}\hline\hline
$\circ$ & $a$ & $c$ & $e$ & $g$ & $p$ & $r$ & $t$ & $v$\\\hline\hline
$a$ & $a=x$ & $c $ & $ e $ & $ g $ & $ p $ & $ r $ & $ t $ & $ v$\\
$c$ & $c$ &$ a=x$ &$ p $ & $ r $ & $ e $ & $ g $ & $ v $ & $ t$\\
$e$  & $ e $ & $ v $ & $ g $ & $ t $ & $ c $ & $ p $ & $ a=x $ & $ r$\\
$g$  & $ g $ & $ r $ & $ t $ & $ a=x $ & $ v $ & $ c $ & $ e $ & $ p$\\
$p$  & $ p $ & $ t $ & $ r $&$ v$ & $a=x$ & e &$ c$ & $g$\\
$r$ &$ r$ & $g $ & $ v $ & $ c $ & $ t $ & $ a=x $ & $ p $ & $ e$\\
$t$ & $ t $ & $ p $ & $ a=x $ & $ e $ & $ r $ & $ v $ & $ g $ & $ c$\\
$v$  & $ v $ & $ e $ & $ c $ & $ p $ & $ g $ & $ t $ & $ r $ & $ a=x$\\\hline\hline
\end{tabular}
\ \ \label{autF}
\end{equation}

This is the dihedral group of order 8.
\end{example}

Note that all these automorphism groups would be the ``Galois groups''
for the respective extensions of \{1\}. Also,
$\mathbb{F}\left(  n\right)  $ is a quotient of $\mathbb{F}\left(  m\right)  $
whenever $n<m$.

The class of all unital 3-field of characteristic different from one might be inaccessible for
a (concrete) classification. Here are some examples, where the respective fields
require more than one generator in the present definition.

Fix a unital 3-field $\mathbb{F}$, and put
\begin{equation}
\mathbb{F}_0\left(  n_{1},\ldots,n_{k}\right)=\mathbb{F}_0\left[  x_{1}
,\ldots,x_{k}\right] \diagup\left\langle \left(
x_{1}-1\right)  ^{n_{1}},\ldots,\left(  x_{k}-1\right)  ^{n_{k}}\right\rangle
\end{equation}
This extension of $\mathbb{F}$ is characterized by the fact that it displays
the fewest possible relations a field of $k$ generators possibly can have
(this will be made precise below). It can be shown that
\begin{multline}
\mathbb{F}_0\left(  n_{1},\ldots,n_{k}\right)  =\\
\left\{  1+\sum_{(1,\ldots,1)\leq\alpha\leq(n_{1}-1,\ldots,n_{k}-1)}
\varepsilon_{\alpha}(1-x)^{\alpha}\mid\varepsilon_{\alpha}=0,1,\ (x_{k}
-1)^{n_{k}}=0,\ k=1,\ldots,n\right\}  ,
\end{multline}
where $(1-x)^{\alpha}=(1-x_{1})^{\alpha_{1}}\cdots(1-x_{n})^{\alpha_{n}}$.
Much more relations are necessary in order to present the Cartesian product
$\mathbb{F}_0\left(  n_{1}\right)  \times\ldots\times\mathbb{F}_0\left(
n_{k}\right)  $. Denote by $\xi_{i}$ the generator of the 3-field
$\mathbb{F}_0(n_{i})$ and by $x_{i}$ the element of $\mathbb{F}_0\left(
n_{1}\right)  \times\ldots\times\mathbb{F}_0\left(  n_{k}\right)  $ which has
the unit element in each entry except at the place $i$ where it is $\xi_{i}$.
Then for each element $(f_{1},\ldots,f_{n})\in\mathbb{F}_0\left(  n_{1}\right)
\times\ldots\times\mathbb{F}_0\left(  n_{k}\right)  $ there are $\varepsilon
_{ij}=0,1$ so that
\begin{equation}
(f_{1},\ldots,f_{n})=(1,\dots,1)+\sum_{i=1}^{n}\sum_{j=1}^{k_{n}}
\varepsilon_{ij}(1-x_{i})^{j}.
\end{equation}
Consequently,
\begin{multline}
\mathbb{F}_0\left(  n_{1}\right)\times\ldots\times\mathbb{F}_0\left(
n_{k}\right)  =\\
\mathbb{F}_0\left[  x_{1},\ldots,x_{k}\right] \diagup
\left\langle \left(  x_{1}-1\right)  ^{n_{1}},\ldots,\left(  x_{k}-1\right)
^{n_{k}}, (x_{i}-1)(x_{j}-1), i,j=1,\ldots,n, i\neq j\right\rangle
\end{multline}

\begin{theorem}
Let $\mathbb{F}$ be a finite field with $\chi(\mathbb{F})=1$, generated as a
unital 3-field by $n$ elements. Let, as before, $\mathbb{F}_{0}=(\mathbb{Z}
/2\mathbb{Z})^{\mathrm{odd}}=\{1\}$.

\begin{enumerate}
\item There exist natural numbers $k_{1},\ldots,k_{n}$ such that $\mathbb{F}$
is a quotient of $\mathbb{F}_{0}(k_{1},\ldots,k_{n})$.

\item The ideal $\mathcal{J}$ such that $\mathbb{F}\cong\mathbb{F}_0\left[
x_{1},\ldots,x_{n}\right]  \diagup\mathcal{J}$ is of the form
\begin{equation}
\mathcal{J}=\left\langle \left(  x_{1}-1\right)  ^{k_{1}},\ldots,\left(
x_{n}-1\right)  ^{k_{n}},P_{1},\ldots,P_{N}\right\rangle , \label{jp}
\end{equation}
where the polynomials $P_{1},\ldots, P_{N}$ are neither divisible by an odd polynomial
nor by any of the $(x_{k}-1)^{n_{k}}$.
\end{enumerate}
\end{theorem}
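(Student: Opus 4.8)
The plan is to combine the universal property of polynomial algebras (Theorem on universality) with the structural result on singly generated fields (Theorem \ref{theor-f}) and the characterization of completely even polynomials. First I would use the fact that since $\mathbb{F}$ is generated by $n$ elements over the prime field $\mathbb{F}_0=\{1\}$, the universal property gives a surjection $\Psi:\mathbb{F}_0[x_1,\ldots,x_n]\to\mathbb{F}$, so that $\mathbb{F}\cong\mathbb{F}_0[x_1,\ldots,x_n]\diagup\mathcal{J}$ with $\mathcal{J}=\ker\Psi$ an ideal (in the sense of this paper, i.e.\ a binary ideal of the even part $\mathcal{Q}(\mathbb{F}_0[x_1,\ldots,x_n])=\mathbb{F}_0[x_1,\ldots,x_n]^{\mathrm{even}}$). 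Because $\mathbb{F}$ is a unital $3$-field, Theorem \ref{th-maxi} tells me $\mathcal{J}$ is ``evenly maximal'': no proper ideal strictly containing $\mathcal{J}$ may intersect the odd part $\mathbb{F}_0[x_1,\ldots,x_n]^{\mathrm{odd}}$.

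\textbf{Proving part (1).} For each generator $x_i$, its image in $\mathbb{F}$ generates a singly generated subfield; applying Theorem \ref{theor-f} to the image of the one-variable subalgebra $\mathbb{F}_0[x_i]$, I obtain a natural number $k_i$ with $(x_i-1)^{k_i}\in\mathcal{J}$ (this is exactly the relation forcing $\mathbb{F}_0[x_i]\diagup\langle(x_i-1)^{k_i}\rangle\cong\mathbb{F}_0(k_i)$). Since $\mathcal{J}$ contains all of $\langle(x_1-1)^{k_1},\ldots,(x_n-1)^{k_n}\rangle$, the surjection $\Psi$ factors through the quotient by this smaller ideal, which is precisely $\mathbb{F}_0(k_1,\ldots,k_n)$. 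This exhibits $\mathbb{F}$ as a quotient of $\mathbb{F}_0(k_1,\ldots,k_n)$, giving (1). The finiteness of $\mathbb{F}$ guarantees each $k_i$ is finite.

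\textbf{Proving part (2).} Writing $\mathcal{J}=\langle(x_1-1)^{k_1},\ldots,(x_n-1)^{k_n},P_1,\ldots,P_N\rangle$ with generators beyond the pure-power relations collected as $P_1,\ldots,P_N$, I must verify the divisibility constraints. That no $P_j$ is divisible by an odd polynomial follows from the even-maximality: if some $P_j=QR$ with $Q$ a non-invertible odd factor, then $\langle Q,\mathcal{J}\rangle$ would be a proper ideal strictly containing $\mathcal{J}$ and meeting the odd part, contradicting Theorem \ref{th-maxi} exactly as in the proof of the completely-even characterization. That no $P_j$ is divisible by any $(x_k-1)^{n_k}$ is a normalization choice: any component of $P_j$ lying in the ideal generated by a pure power is already accounted for by that generator, so I may reduce $P_j$ modulo the $(x_k-1)^{n_k}$ without changing $\mathcal{J}$, and discard it if it becomes redundant.

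\textbf{The main obstacle.} The delicate point is the multivariable analogue of complete evenness: in one variable, Theorem \ref{theor-f} and the preceding completely-even theorem do the heavy lifting, but here I must ensure that the generators $P_j$ genuinely capture all additional relations while respecting \emph{both} normalizations simultaneously. The hard part will be showing that reduction modulo the pure-power relations and simultaneous removal of odd factors can be carried out consistently—one must check these two normalization steps do not interfere, i.e.\ that stripping an odd factor from $P_j$ does not reintroduce divisibility by some $(x_k-1)^{n_k}$, and vice versa. I expect this to reduce to the multiplicativity of the $2$-adic norm $\|PQ\|_2=\|P\|_2\|Q\|_2$ established before Lemma \ref{lem-pid}, applied coefficientwise after expanding in powers of $(x_k-1)$, together with the fact proved in Theorem \ref{theor-f} that a polynomial is even iff its constant term (in the $(x-1)$-expansion) is even.
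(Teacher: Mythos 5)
Your setup and part (1) are sound and essentially identical to the paper's argument: each generator $x_i$ generates a finite, singly generated subfield, Theorem \ref{theor-f} forces $(x_i-1)^{k_i}\in\mathcal{J}$, and the quotient map then factors through $\mathbb{F}_0(k_1,\ldots,k_n)$. The pure-power normalization in part (2) (reducing generators modulo the $(x_k-1)^{k_k}$) is also fine.

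The odd-factor argument in part (2), however, has a genuine gap, and it cannot be repaired in the form you give it. You claim that if $P_j=QR$ with $Q$ a non-invertible odd polynomial, then $\langle Q,\mathcal{J}\rangle$ is a \emph{proper} ideal meeting the odd part, contradicting Theorem \ref{th-maxi}. But $\langle Q,\mathcal{J}\rangle$ is never proper: $\mathcal{J}$ contains the pure powers $(x_i-1)^{k_i}$, so each $x_i-1$ is nilpotent modulo $\mathcal{J}$; an odd polynomial has the form $Q=1+E$ with $E\in\langle x_1-1,\ldots,x_n-1\rangle$, hence $E$ is nilpotent and $Q$ is a unit modulo $\mathcal{J}$, i.e. $\langle Q,\mathcal{J}\rangle=\mathcal{U}(\mathbb{F}_0[x_1,\ldots,x_n])$. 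This is exactly where several variables differ from the one-variable completely-even theorem you are transplanting: there the ideal is the principal ideal $\langle P_0\rangle$, and $\langle Q\rangle$ is proper because $(\mathbb{Z}\diagup 2\mathbb{Z})[x]$ is a domain and $Q$ is a non-unit; once the pure powers sit inside the ideal, properness is destroyed and no contradiction can ever be reached. Worse, the intermediate statement you are trying to prove — that a generating set cannot contain a $P_j$ with an odd non-unit factor — is false: the ideal $\langle (x_1-1)^2,(x_2-1)^2,\,x_1(x_1-1)(x_2-1)\rangle$ equals $\langle (x_1-1)^2,(x_2-1)^2,\,(x_1-1)(x_2-1)\rangle$ (because $x_1^2\equiv 1$ modulo $(x_1-1)^2$) and so presents the 3-field $\mathbb{F}_0(2)\times\mathbb{F}_0(2)$, yet its third generator is divisible by the odd non-unit $x_1$. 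The theorem is an existence-of-a-normalized-presentation claim, and the proof must go the way the paper's parenthetical remark indicates: since every odd polynomial is invertible modulo the pure powers, a factorization $P_j=QR$ with $Q$ odd lets you replace $P_j$ by $R$ without changing the ideal; iterating strips all odd factors (the degree drops, so this terminates), and it never reintroduces divisibility by a pure power, since $(x_k-1)^{k_k}\mid R$ would give $(x_k-1)^{k_k}\mid P_j$. Your closing paragraph half-senses that the interaction of the two normalizations is the crux, but reaches for the wrong tool: multiplicativity of $\Vert\cdot\Vert_2$ concerns coefficients in $\mathcal{U}(\mathbb{Q}^{\mathrm{odd}})$, whereas here the coefficients lie in $\mathcal{U}(\mathbb{F}_0)=\mathbb{Z}\diagup 2\mathbb{Z}$ and the operative fact is the nilpotency of the $x_i-1$ modulo $\mathcal{J}$.
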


\begin{proof}
Denote by $x_{1},\ldots,x_{n}$ the 3-field generators of $\mathbb{F}$. As each
of them generates a unital 3-field, $(x_{i}-1)^{k_{i}}=0$ for some $k_{i}$,
$i=1,\ldots,n$, and it follows that there is a quotient map of $\mathbb{F}
_{0}(k_{1},\ldots,k_{n})$ onto $\mathbb{F}$.

In order to prove the second part of the theorem, we select even polynomials
$P_{1},\ldots,P_{N}$, not divisible by any of the $(x_{k}-1)^{n_{k}}$, so that
$\mathbb{F}\cong\mathbb{F}_{0}(k_{1},\ldots,k_{n})
\diagup\langle P_{1},\ldots,P_{N}\rangle$. Similar to the proof of
{Theorem \ref{theor-f}}, one can show that it is not possible that any of
these polynomials contains an odd factor (Alternatively, one can use the fact
that all odd polynomials which can arise as factors here are invertible.)
\end{proof}

\begin{example}
Let us consider the \textquotedblleft unfree\textquotedblright\ unital
$3$-field $\mathbb{F}^{2}$ and the \textquotedblleft free\textquotedblright
\ unital $3$-field $\mathbb{F}\left(  n_{1},n_{2}\right)  $. We show that
$\mathbb{F}\left(  2\right)  \times\mathbb{F}\left(  2\right)  $ and
$\mathbb{F}\left(  2,2\right)  $ are not isomorphic. By definition, we have
$\mathbb{F}\left(  2\right)  =\left\{  1+\varepsilon\left(  y-1\right)
\mid\varepsilon\in\mathbb{Z}\diagup 2\mathbb{Z}\right\}  $, which contains $2$ elements
$\left\{  1,y\right\}  $ with the relations (as pairs) $1+1=y+y=0$ and
$y^{2}=1$. The most unfree $3$-field $\mathbb{F}\left(  2\right)
\times\mathbb{F}\left(  2\right)  $ has $4$ elements and generated by
$x_{1}=\left(
\begin{array}
[c]{c}
y\\
1
\end{array}
\right)  $, $x_{2}=\left(
\begin{array}
[c]{c}
1\\
y
\end{array}
\right)  $. It is easily seen that $x_{1}^{2}=x_{2}^{2}=1$, $x_{1}x_{2}
=x_{1}+x_{2}-1$, and therefore
\begin{equation}
\mathbb{F}\left(  2\right)  \times\mathbb{F}\left(  2\right)  =\mathbb{Z}
_{2}\left[  x_{1},x_{2}\right]  ^{\mathrm{odd}}\diagup\left\langle \left(
x_{1}-1\right)  ^{2},\left(  x_{2}-1\right)  ^{2},P\right\rangle , \label{fp}
\end{equation}
where the additional polynomial is $P=\left(  x_{1}-1\right)  \left(
x_{2}-1\right)  $ (see (\ref{jp})). On the other hand,
\begin{equation}
\mathbb{F}\left(  2,2\right)  =\left\{  1+\varepsilon_{1}\left(
x_{1}-1\right)  +\varepsilon_{2}\left(  x_{2}-1\right)  +\varepsilon\left(
x_{1}-1\right)  \left(  x_{2}-1\right)  \mid\varepsilon_{i}\in\mathbb{Z}
_{2}\right\}
\end{equation}
contains $8$ elements and
\begin{equation}
\mathbb{F}\left(  2,2\right)  =\mathbb{Z}\diagup 2\mathbb{Z}\left[  x_{1},x_{2}\right]
^{\mathrm{odd}}\diagup\left\langle \left(  x_{1}-1\right)  ^{2},\left(
x_{2}-1\right)  ^{2}\right\rangle ,
\end{equation}
which is not isomorphic to the field in (\ref{fp}).
\end{example}

Our final examples show that there exist noncommutative finite unital 3-fields.

\begin{example}
[Triangular $3$-fields]\label{ex-tri}
Let $\mathbb{F}$ be a unital 3-field and put
\begin{equation}
\mathbb{D}\left(  n,\mathbb{F}\right)  =\left\{  A_{n}\left(  f_{i},b_{ij}\right)  \mid
f_{i}\in\mathbb{F},b_{ij}\in\mathcal{U}\left(  \mathbb{F}\right)
,i.j=1,\ldots,n\right\}  ,
\end{equation}
where
\begin{equation}\label{5.26}
A_{n}\left(  f_{i},b_{ij}\right)  =\left(
\begin{array}
[c]{ccc}
f_{1} &  & 0\\
& \ddots & \\
b_{ij} &  & f_{n}
\end{array}
\right)
\end{equation}
In order to see that $\mathbb{D}\left(n,\mathbb{F}\right)$ is a unital 3-field, observe that
sums and products of two such matrices lie again in $\mathbb{D}\left(  n,\mathbb{F}\right)$ and $\mathbb{D}\left(  n,\mathbb{F}\right)  $ is a noncommutative
unital $3$-algebra (if $n>1$). Furthermore, each matrix (\ref{5.26}) is invertible, and its inverse has the form
\begin{equation}
A_{n}\left(f_{i},b_{ij}\right)^{-1}=A_{n}\left(f_{i}^{-1},\hat{b}_{ij}\right)=
\left(
\begin{array}
[c]{ccc}
f_{1}^{-1} &  & 0\\
& \ddots & \\
\hat{b}_{ij} &  & f_{n}^{-1}
\end{array}
\right)  ,
\end{equation}
where $\hat{b}_{ij}\in\mathcal{U}\left(  \mathbb{F}\right)  $. This follows from the fact that
the standard procedure for the inversion of a matrix yields expressions
 which are well-defined within $\mathbb{F}$. In total,
$\mathbb{D}\left(  n,\mathbb{F}\right)  $ is a noncommutative unital
$3$-field, which is finite in case $\mathbb{F}$ is.
\end{example}

\begin{example}
[Quaternion $3$-fields]We start by selecting a unital 3-field
$\mathbb{F}$, and will equip the free 3-vector space (with basis elements $i_{\mu}$, $\mu=0,1,2,3$)
\begin{equation}
\mathbb{HF}=\left(\mathbb{F}^{4}\right)^{\mathrm{free}}=\left\{\sum_{\mu=0}^{3}
a_{\mu}i_{\mu}\mid a_{\mu}\in\mathcal{U}\left(  \mathbb{F}\right)
,\sum_{\mu=0}^{3}a_{\mu}\in\mathbb{F}\right\}  ,
\end{equation}
with a multiplication (so that it will become a unital 3-field) in the following way:
We assume the quaternion relations
\begin{equation}
i_{1}^{2}=i_{2}^{2}=i_{3}^{2}=i_{1}i_{2}i_{3}=-1,i_{0}=1
\end{equation}
and linearly extend them to a multiplication on the whole of
$\mathbb{HF}$. This product is well-defined, since the sum of coefficients
$c_k\in\mathcal{U}(\mathbb{F})$
in
\begin{equation}
\left(\sum_{k=0}^{3}a_ki_k\right)\left(\sum_{k=0}^{3}b_ki_k\right)=\sum_{k=0}^{3}c_ki_k,
\end{equation}
where $a_k,b_k\in\mathcal{U}(\mathbb{F})$,
$\sum_{k=0}^{3}a_k\in \mathbb{F},\sum_{k=0}^{3}b_k\in \mathbb{F}$ can be written in the form
\begin{equation}
\sum_{k=0}^{3}c_k=
\left(\sum_{k=0}^{3}a_k\right)\left(\sum_{k=0}^{3}b_k\right)-2\gamma,\quad
\gamma\in\mathcal{U}(\mathbb{F}).
\end{equation}
Since, in general, for an element $u\in\mathcal{U}(\mathbb{F})$, we have that
$2u\in\mathcal{Q}(\mathbb{F})$, it follows that the sum over the $c_k$'s is in $\mathbb{F}$,
and $\mathbb{HF}$ is a unital 3-algebra. In
addition, if $q=a_{0}+a_{1}i_{1}+a_{2}i_{2}+a_{3}i_{3}$, we let $\bar{q}
=a_{0}-a_{1}i_{1}-a_{2}i_{2}-a_{3}i_{3}$ and observe that $q\bar{q}=(a_{0}
^{2}+a_{1}^{2}+a_{2}^{2}+a_{3}^{2})i_{0}\in\mathbb{HF}$. Then,
for each element q of $\mathbb{HF}$, an inverse is given by
\begin{equation}
q^{-1}=\dfrac{\bar{q}}{q\bar{q}}=\dfrac{a_{0}-a_{1}i_{1}-a_{2}i_{2}-a_{3}
i_{3}}{a_{0}^{2}+a_{1}^{2}+a_{2}^{2}+a_{3}^{2}}\in\mathbb{HF}
\end{equation}
and thus, $\mathbb{HF}$ is a non-commutative 3-field. (In this construction, the elements
$i_k$ can be replaced by the canonical generators/relations of Clifford algebras in order to obtain
a wider class of examples.)
\end{example}
Why is the above construction not possible if $\mathbb{F}$ is replaced by a finite binary field?
The main reason is that in ternary fields there are no zero elements. In fact, each finite binary field, for
some prime $p$, $\mathbb{Z}/p$ is a subfield, and now the probably fastest way to find an obstruction is through
Lagrange's four-square theorem, by which each integer, hence also the prime $p$, is the sum of four squares.

\vskip 1cm

\noindent
\textsc{Acknowledgements.} Research supported by the ERC through  AdG 267079,
as well as by Deutsche Forschungsgemeinschaft (DFG) within the framework of
\emph{Exzellenzstrategie des Bundes und der L\"{a}nder EXC 2044–390685587, Mathematik M\"{u}nster:
Dynamik –- Geometrie –- Struktur}.

The first author (S.D.) would like to express his gratitude
to Joachim Cuntz and Raimar Wulkenhaar for financial support,
at a very early stage of this work.

\newpage

\end{document}